\theoremstyle{plain}
\newtheorem{Pocz}{Poczatek}[section]
\newtheorem{Proposition}[Pocz]{Proposition}
\newtheorem{Theorem}[Pocz]{Theorem}
\newtheorem{Corollary}[Pocz]{Corollary}
\newtheorem{Lemma}[Pocz]{Lemma}
\newtheorem{Question}[Pocz]{Question}
\newtheorem{Example}[Pocz]{Example}
\theoremstyle{definition}
\newtheorem{Definition}[Pocz]{Definition}
\theoremstyle{remark}
\DeclareMathOperator*{\diam}{diam}
\DeclareMathAlphabet\mathbfcal{OMS}{cmsy}{b}{n}
\def\diam{\mathrm{diam}}
\DeclareMathOperator*{\st}{st}
\numberwithin{equation}{section}
\author{Logan ~ Higginbotham}
\address{Campbell University, Buies Creek, NC, USA}
\email{lhigginbotham@campbell.edu}
\author{Kevin ~ Sinclair}
\address{Shenandoah University, Winchester, VA, USA}
\email{ksinclai@su.edu}
\title{Asymptotic Filtered Colimits}
\date{ \today
}
\keywords{}
\begin{document}

\begin{abstract}
If one has a collection of large scale spaces $\{(X_s,\mathcal{LSS}_s)\}_{s\in S}$ with certain compatibility conditions one may define a large scale space on $X=\bigcup\limits_{s\in S}X_s$ in a way where every function on $X$ is large scale continuous if and only if the function restricted to every $X_s$ is large scale continuous. This large scale structure is called the asymptotic filtered colimit of $\{(X_s,\mathcal{LSS}_s)\}_{s\in S}$. In this paper, we explore a wide variety of coarse invariants that are preserved between $\{(X_s,\mathcal{LSS}_s)\}_{s\in S}$ and the asymptotic filtered colimit $(X,\mathcal{LSS})$. These invariants include finite asymptotic dimension, exactness, property A, and being coarsely embeddable into a separable Hilbert space. We also put forth some questions and show some examples of filtered colimits that give an insight into how to construct filtered colimits and what may not be preserved as well.
\end{abstract}

\maketitle

\section{Introduction}
The main focus of this paper is to introduce the notion  of asymptotic filtered colimits. We do this by deriving the definition of the asymptotic filtered colimit construction and then look at some examples of asymptotic filtered colimits. We then show multiple coarse invariants that asymptotic filtered colimits preserve while stating some questions along the way. Finally, we end this paper with something that asymptotic filtered colimits do not preserve. We will start by introducing definitions associated with families of subsets of a set $X$ in order to define large scale structures:

\begin{Definition}
	Let $\mathcal{U}$ be a family of subsets of a set $X$ and let $V$ be a subset of $X$. The \textbf{star} of $V$ against $\mathcal{U}$, denoted $\st(V,\mathcal{U})$, is the set 
	$$\bigcup\limits_{\substack{U\in \mathcal{U}\\{U\cap V\neq \varnothing}}} U$$ 
	If $\mathcal{V}$ is another family of subsets of $X$, then the family of subsets of $X$ $\left\{\st(V,\mathcal{U})|V\in \mathcal{V}\right\}$ is denoted $\st(\mathcal{V},\mathcal{U})$ for convenience.
\end{Definition}

\begin{Definition}
	Let $\mathcal{U},\mathcal{V}$ be families of subsets of a set $X$. We say $\mathcal{U}$ is a \textbf{refinement} of $\mathcal{V}$ provided for every $U\in\mathcal{U}$ there is a $V\in\mathcal{V}$ so that $U\subseteq V$. In this same situation, we also say that $\mathcal{V}$ \textbf{coarsens} $\mathcal{U}$. Refiniement is denoted as $\mathcal{U}\prec\mathcal{V}$.
\end{Definition}

It is sometimes needed that we need to consider covers of $X$ instead of collections of subsets of $X$. To distinguish families of subsets of $X$ from covers of $X$, we call covers of $X$ scales:

\begin{Definition}
	Given a set $X$, we say $\mathcal{U}$ is a \textbf{scale} of $X$ if $\mathcal{U}$ is a family of subsets of $X$ that covers $X$. If $\mathcal{U}$ is a collection of subsets of $X$, we can make $\mathcal{U}$ into a cover via constructing $\mathcal{U}'=\mathcal{U}\cup \left\{\{x\}\right\}_{x\in X}$. This extension is often called the \textbf{trivial extension} of $\mathcal{U}$.
\end{Definition}

The definition of large scale structues was given in by Dydak in \cite{Dydak}. This interpretation of coarse structures give coarse geometry a more topological flavor.

\begin{Definition}
	\cite{Dydak} Let $X$ be a set. A \textbf{large scale structure} on $X$ is a non-empty set of families of subsets of $X$ $\mathcal{LSS}$ so that the following conditions are satisfied:
	\begin{enumerate}
		\item If $\mathcal{U},\mathcal{V}$ are families of subsets of X with $\mathcal{V}\in\mathcal{LSS}$ and each element $U$ of $\mathcal{U}$ consisting of more than one point is contained in some $V$ of $\mathcal{V}$, then $\mathcal{V}\in\mathcal{LSS}$.
		\item If $\mathcal{U},\mathcal{V}\in\mathcal{LSS},$ then $\st(\mathcal{U},\mathcal{V})\in\mathcal{LSS}$.
	\end{enumerate} 
	Elements $\mathcal{U}$ of $\mathcal{LSS}$ are called \textbf{uniformly bounded families} or \textbf{uniformly bounded scales}.
\end{Definition} 

We note here closure under refinements implies the first condition above. The advantage of having a weaker first requirement is that a large scale structure as defined "disregards" one point sets. That is, one point sets do not "change" the large scale structure. Also, the first item in the definition gives us that the cover $\{\{x\}\}_{x\in X}$ is uniformly bounded for any large scale structure.  We will now remind the reader of some preliminary definitions about maps from one large scale structure to another.

\begin{Definition}
	Let $(X,\mathcal{LSS}_X)$ and $(Y,\mathcal{LSS}_Y)$ be large scale spaces and let $f:X\to Y$. We say $f$ is \textbf{large scale continuous} or \textbf{bornologous} if for every\\ $\mathcal{U}\in\mathcal{LSS}_X,~f(\mathcal{U})\in\mathcal{LSS}_Y$, where $f(\mathcal{U})=\{f(U)|~U\in\mathcal{U}\}$. 
\end{Definition}

%\begin{Definition}
%	Let $(X,\mathcal{LSS}_X)$ and $(Y,\mathcal{LSS}_Y)$ be large scale spaces and let $f:X\to Y$. We say $f$ is a \textbf{coarse embedding} if for every $\mathcal{V}\in\mathcal{LSS}_Y$, we have $f^{-1}(\mathcal{V})\in\mathcal{LSS}_X$, where $f^{-1}(\mathcal{V})=\{f^{-1}(V)|~V\in\mathcal{V}\}$. We say $f$ is \textbf{coarsely surjective} if there exists a $\mathcal{V}\in\mathcal{LSS}_Y$ so that $Y\subseteq\st(f(X),\mathcal{V})$.
%\end{Definition}

%Note that if $f$ is not surjective, then $f(\mathcal{U})$ need not be a cover of $f(X)$. However, we may extend $f(\mathcal{U})$ trivially so that it does cover $f(X)$ and $\st(f(X),\mathcal{V})$ makes sense.

\begin{Definition}
	Let $(X,\mathcal{LSS}_X)$ and $(Y,\mathcal{LSS}_Y)$ be large scale spaces and let $f,g:X\to Y$. We say $f$ and $g$ are \textbf{close} provided there is a $\mathcal{V}\in\mathcal{LSS}_Y$ so that for any $x\in X,~f(x),g(x)\in V$ for some $V\in\mathcal{V}$.
\end{Definition}

%\begin{Definition}
%	Let $(X,\mathcal{LSS}_X)$ and $(Y,\mathcal{LSS}_Y)$ be large scale spaces and let $f:X\to Y$ be large scale continuous. We say $f$ is a \textbf{coarse equivalence} provided $f$ is a coarse embedding and coarsely surjective. If $(X,\mathcal{LSS}_X)$ and $(Y,%\mathcal{LSS}_Y)$ are large scale spaces and a coarse equivalence $f:X\to Y$ exists, we say $(X,\mathcal{LSS}_X)$ and $(Y,\mathcal{LSS}_Y)$ are \textbf{coarsely equivalent}.
%\end{Definition}

\begin{Definition}
	Let $(X,\mathcal{LSS}_X)$ and $(Y,\mathcal{LSS}_Y)$ be large scale spaces and let $f:X\to Y$ be large scale continuous. f is a \textbf{coarse equivalence} if and only if there exists a large scale continuous map $g:Y\to X$ so that $f\circ g$ is close to $id_Y$ and $g\circ f$ is close to $id_X$.
\end{Definition}

\begin{Definition}
	Let $(X,\mathcal{LSS}_X)$ be a large scale space. A property $P$ is a \textbf{coarse invariant} if for any $(Y,\mathcal{LSS}_Y)$ that has property $P$ and is coarsely equivalent to $(X,\mathcal{LSS}_X)$ we have that $(X,\mathcal{LSS}_X)$ also has property $P$.
\end{Definition}

Coarse invariants include, but are not limited to: Metrizability, finite asymptotic dimension, asymptotic property C, property A, exactness, coarse amenability, and coarse embeddability into a separable Hilbert space. We shall explore some of these coarse invariants and show that they are preserved by the asymptotic filtered colimit construction. But we first must define it.

\section{Asymptotic Filtered Colimits}

We begin this chapter by introducing the notion of an asymptotic filtered colimit.

\begin{Definition}\label{afc}
	Suppose $X$ is a set with $\left\{(X_s,\mathcal{LSS}_s)\right\}_{s\in S}$ subsets of $X$ and for each $s\in S$, $X_s$ has the large scale structure $\mathcal{LSS}_s$.
	Further, assume $\bigcup\limits_{s\in S} X_s=X$ and for every $r,s\in S$ we have that the restrictions of the large scale structures $\mathcal{LSS}_r$ and $\mathcal{LSS}_s$ to the set $X_r\cap X_s$ coincide.
	Also, $\forall r,s\in S~\exists t\in S$ such that $X_r\cup X_s \subseteq X_t$.
	Then the \textbf{asymptotic filtered colimit} of $\left\{(X_s,\mathcal{LSS}_s)\right\}_{s\in S}$ of $X$ is the following large scale structure:\\
	$\mathcal{U}$ is uniformly bounded if and only if $\exists s\in S$ and $\mathcal{V}\in \mathcal{LSS}_s$ so that for any $U\in \mathcal{U}$ with $|U|>1~\exists V\in \mathcal{V}$ so that $U\subseteq V$ (and consequently $U\subseteq X_s$). The construction of creating $(X,\mathcal{LSS})$ from $\{(X_s,\mathcal{LSS}_s)\}$ is also called the \textbf{asymptotic filtered colimit construction}. 
\end{Definition}

We note here that another way to think of the uniformly bounded families in the asymptotic filtered colimit $\mathcal{LSS}$ of $\left\{(X_s,\mathcal{LSS}_s)\right\}_{s\in S}$ is the following: For any $\mathcal{U}\in\mathcal{LSS}$ there is an $s\in S$ so that $\mathcal{U}^*\in\mathcal{LSS}_s$, where $\mathcal{U}^*$ is $\mathcal{U}$ with all one-point sets outside of $X_s$ removed. As a consequence, for every $s\in S$, $\mathcal{LSS}_s\subseteq\mathcal{LSS}$. We will make use of these remarks moving forward.

\begin{Definition}
Suppose $X$ is a set and $\mathcal{LSS}$ is the asymptotic filtered colimit of subsets $\left\{(X_s, \mathcal{LSS}_s)\right\}_{s\in S}$ of $X$; let $\mathcal{U}\in\mathcal{LSS}$. Define $\mathbfcal{U^*}$ to be $\mathcal{U}$ with all one point sets outside of $X_s$ removed, where $X_s$ is the subset from $\{X_s\}_{s\in S}$ for which all elements of $\mathcal{U}$ of cardinality greater than one are a subset of by the definition of asymptotic filtered colimit.
\end{Definition}

\begin{Proposition}\label{afclss}
	The asymptotic filtered colimit of $\left\{(X_s,\mathcal{LSS}_s)\right\}_{s\in S}$ of $X$ (denoted $\mathcal{LSS}$) is indeed a large scale structure.
\end{Proposition}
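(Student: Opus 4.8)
The plan is to check the three requirements of a large scale structure in Dydak's sense: that $\mathcal{LSS}$ is non-empty, that it satisfies the weak refinement condition (1), and that it is closed under the star operation (2). Non-emptiness is immediate. Picking any $s\in S$ and any $\mathcal{V}\in\mathcal{LSS}_s$ (which exists since each $\mathcal{LSS}_s$ is itself a large scale structure), the family $\{\{x\}\}_{x\in X}$ lies in $\mathcal{LSS}$ because it has no members of cardinality greater than one, so the defining condition of the colimit holds vacuously with witness $s,\mathcal{V}$. Hence $\mathcal{LSS}\neq\varnothing$.

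For condition (1), I would take a family $\mathcal{V}\in\mathcal{LSS}$ and a family $\mathcal{U}$ each of whose multi-point members is contained in some member of $\mathcal{V}$, and then simply compose containments to conclude $\mathcal{U}\in\mathcal{LSS}$. Unwinding the definition, $\mathcal{V}\in\mathcal{LSS}$ supplies $s\in S$ and $\mathcal{A}\in\mathcal{LSS}_s$ with every multi-point $V\in\mathcal{V}$ inside some $A\in\mathcal{A}$. If $U\in\mathcal{U}$ has $|U|>1$ and $U\subseteq V$ for some $V\in\mathcal{V}$, then $|V|>1$ as well, so $V\subseteq A$ for some $A\in\mathcal{A}$, whence $U\subseteq A$. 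Thus the same $s,\mathcal{A}$ witness $\mathcal{U}\in\mathcal{LSS}$.

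The substantive step is condition (2), and this is exactly where the filtering and compatibility hypotheses enter. Given $\mathcal{U},\mathcal{W}\in\mathcal{LSS}$, I would first fix witnesses $r,\mathcal{U}'\in\mathcal{LSS}_r$ and $s,\mathcal{W}'\in\mathcal{LSS}_s$. Using the filtered hypothesis, choose $t\in S$ with $X_r\cup X_s\subseteq X_t$, so that $X_r\cap X_t=X_r$ and $X_s\cap X_t=X_s$. Since the restrictions of $\mathcal{LSS}_r$ and $\mathcal{LSS}_t$ to $X_r$ coincide, and the restriction of $\mathcal{LSS}_r$ to $X_r$ is $\mathcal{LSS}_r$ itself, the family $\mathcal{U}'$ lifts to some $\tilde{\mathcal{U}}\in\mathcal{LSS}_t$ with $\tilde{\mathcal{U}}|_{X_r}=\mathcal{U}'$; in particular each multi-point member of $\mathcal{U}'$ sits inside a member of $\tilde{\mathcal{U}}$, and symmetrically one obtains $\tilde{\mathcal{W}}\in\mathcal{LSS}_t$ for $\mathcal{W}'$. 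Passing to the trivial extensions $\tilde{\mathcal{U}}^+,\tilde{\mathcal{W}}^+$ over $X_t$ (still in $\mathcal{LSS}_t$, since adjoining one-point sets does not affect uniform boundedness), I form $\st(\tilde{\mathcal{U}}^+,\tilde{\mathcal{W}}^+)\in\mathcal{LSS}_t$ because $\mathcal{LSS}_t$ is already a large scale structure. The goal is to show this single family witnesses $\st(\mathcal{U},\mathcal{W})\in\mathcal{LSS}$, i.e. that every multi-point $\st(U,\mathcal{W})$ lies inside a member of it.

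The hard part will be the bookkeeping of one-point sets in this last claim, because the star operation can fuse singletons into a multi-point set even when $U$ itself is a singleton; this is precisely why the trivial extensions are needed. I would split into two cases. If $U$ is multi-point, then $U\subseteq X_r\subseteq X_t$ and $U\subseteq\tilde{U}$ for some $\tilde{U}\in\tilde{\mathcal{U}}$; any $W\in\mathcal{W}$ meeting $U$ then lands inside $\st(\tilde{U},\tilde{\mathcal{W}}^+)$, using that $W\cap U\neq\varnothing$ forces the lifted sets to meet (when $W$ is multi-point) or that the offending point already lies in $\tilde{U}\subseteq X_t$ and its singleton is available in $\tilde{\mathcal{W}}^+$ (when $W$ is a singleton). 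If instead $U=\{x\}$ but $\st(\{x\},\mathcal{W})$ is multi-point, then some multi-point $W\in\mathcal{W}$ contains $x$, forcing $x\in X_s\subseteq X_t$, so $\{x\}\in\tilde{\mathcal{U}}^+$ and a parallel argument gives $\st(\{x\},\mathcal{W})\subseteq\st(\{x\},\tilde{\mathcal{W}}^+)$. In both cases the containing set is a member of $\st(\tilde{\mathcal{U}}^+,\tilde{\mathcal{W}}^+)\in\mathcal{LSS}_t$, which completes the verification of (2) and hence the proof.
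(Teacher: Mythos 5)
Your proposal is correct and follows essentially the same route as the paper's proof: verify the refinement axiom by composing containments with the same witness $s$, and verify the star axiom by passing to a common $X_t$ with $X_r\cup X_s\subseteq X_t$, transporting the witness families into $\mathcal{LSS}_t$ via the compatibility of restrictions, starring there, and then accounting for one-point sets. The only difference is bookkeeping: you adjoin the trivial extension over $X_t$ before taking the star, whereas the paper strips the outside singletons first and re-adjoins the singleton members afterward; your version handles the case where a singleton of $\mathcal{U}$ gets fused into a multi-point star a bit more explicitly, but the underlying argument is the same.
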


\begin{proof}
	Let $\mathcal{U}\in \mathcal{LSS}$ and suppose we have a family of subsets of $X$, $\mathcal{W}$, so that $|W|>1$ implies there exists a $U\in \mathcal{U}$ so that $W\subseteq U$.
	Since $\mathcal{U}\in \mathcal{LSS}$, $\exists s\in S$ and $\mathcal{V}\in \mathcal{LSS}_s$ so that $|U|>1$ implies there is a $V\in \mathcal{V}$ such that $U\subseteq V$.
	If $|W|>1$ and $W\subseteq U$ along with $U\subseteq V$, then we have that $W\subseteq V$. Then by definition and choice of $s\in S$, we have $\mathcal{W}\in \mathcal{LSS}$.\\
	Now suppose $\mathcal{U},\mathcal{V}\in \mathcal{LSS}$. Then $\exists r\in S$ and $\mathcal{F}\in \mathcal{LSS}_r$ so that for any $U\in \mathcal{U}$ with $|U|>1$, we have $\exists F\in \mathcal{F}$ such that $U\subseteq F$. 
	Also, $\exists s\in S$ and $\mathcal{G}\in \mathcal{LSS}_s$ so that for any $V\in \mathcal{V}$ with $|V|>1$, we have $\exists G\in \mathcal{G}$ such that $V\subseteq G$.
	Select $t\in S$ such that $X_r\cup X_s\subseteq X_t$. We show that $\st(\mathcal{U},\mathcal{V})\in \mathcal{LSS}$. 
	Define $\mathcal{U^*}=\mathcal{U}\setminus\left\{U\in \mathcal{U}~|~U=\{x\},~x\in X\setminus X_r\right\}$. Likewise, define $\mathcal{V^*}=\mathcal{V}\setminus\left\{V\in \mathcal{V}~|~V=\{x\},~x\in X\setminus X_s\right\}$.
	Notice that we have $\mathcal{U^*}\in \mathcal{LSS}_r$ and that $\mathcal{V^*}\in \mathcal{LSS}_s$. Since $X_r\subseteq X_t$ and $X_s\subseteq X_t$ and the restrictions of the large scale structures of $\mathcal{LSS}_r$ and $\mathcal{LSS}_t$ (respectively $\mathcal{LSS}_s$ and $\mathcal{LSS}_t$) to the intersection $X_r\cap X_t=X_r$ (respectively $X_s\cap X_t=X_s$) coincide, we therefore have that $\mathcal{U^*}\in \mathcal{LSS}_r$ implies $\mathcal{U^*}\in \mathcal{LSS}_t$ along with $\mathcal{V^*}\in \mathcal{LSS}_s$ implies $\mathcal{V^*}\in \mathcal{LSS}_t$.
	Since $(X_r, \mathcal{LSS}_r)$ and $(X_s,\mathcal{LSS}_s)$ coincide with $(X_t,\mathcal{LSS}_t)$, we have that $\mathcal{U^*}\in\mathcal{LSS}_r$ implies there is a uniformly bounded family $\mathcal{U'}\in \mathcal{LSS}_t$ so that $\mathcal{U'}|_{X_r}=\mathcal{U^*}$, where $\mathcal{U'}|_{X_r}:=\left\{U'\cap X_r~|~U'\in\mathcal{U'}\right\}$. But this means that for every $U\in \mathcal{U}$ with $|U|>1$, there is a $U'\in \mathcal{U'}$ so that $U\subseteq U'$. Thus, $\mathcal{U^*}\in \mathcal{LSS}_t$.
	Since $\mathcal{U^*},\mathcal{V^*}\in \mathcal{LSS}_t$, we have that $\st(\mathcal{U^*},\mathcal{V^*})\in \mathcal{LSS}_t$. Let $\mathcal{W}=\st(\mathcal{U^*},\mathcal{V^*})\cup\left\{V\in \mathcal{V}~|~V=\{x\},~x\in X\right\}$. Then $\mathcal{W}\in \mathcal{LSS}$.
	We show that for any $U\in \mathcal{U}$ with $|\st(U,\mathcal{V})|>1$, we have that there exists $W\in \mathcal{W}$  so that $\st(U,\mathcal{V})\in \mathcal{W}$. This would show that $\st(\mathcal{U},\mathcal{V})\in \mathcal{LSS}$. 
	If $|U|>1$, then $U\in \mathcal{U^*}$ which implies that $\st(U,\mathcal{V})\in \mathcal{W}$.
	If $|U|=1$, then since $|\st(U,\mathcal{V})|>1$ we have that there is a $V\in \mathcal{V}$ such that $|V|>1$ and $U\subseteq V$. This gives us that $\st(U,\mathcal{V})\in \mathcal{W}$.
\end{proof}

Now that we've established that asymptotic filtered colimits are large scale structures, we will now provide a couple of examples of asymptotic filtered colimits:

\begin{Example}
Let $X$ be the group of all sequences with integer entries that converge to zero; the operation is componentwise addition. One might consider making $X$ a metric space by using the metric $d((x_1,x_2,...),(y_1,y_2,...))=\mathop{\Sigma}\limits_{i=1}^{\infty} |x_i-y_i|$. As a consequence, one yields a large scale structure $\mathcal{LSS}$ for $X$ which is induced from the metric. By that we mean $\mathcal{U}\in\mathcal{LSS}$ if and only if $\mathop{\sup}\limits_{U\in\mathcal{U}} \diam{(U)} <\infty$. This large scale structure is an asymptotic filtered colimit in the following way: Let $X_s=\mathbb{Z}^s\times\{0\}\times\{0\}\times...$ and let $\mathcal{LSS}_{s}$ be induced from the metric $d((x_1,...,x_s,0,0,...),(y_1,...,y_s,0,0,..))=\mathop{\Sigma}\limits_{i=1}^{s} |x_i-y_i|$. Then $(X,\mathcal{LSS})$ is the asymptotic filtered colimit of $\{(X_s,\mathcal{LSS}_s)\} _{s\in\mathbb{N}}$.
\end{Example}

\begin{Example}
Let $\{M_s, d^{M}_{s}\}_{s\in S}$ be a collection of metric spaces with $M_s\cap M_t=\varnothing$, $s\neq t$. Then one may define an $\infty$ metric on $X=\bigcup\limits_{s\in S} M_s$ in the following way: $d(x,y)=d^{M}_s(x,y)$ if $x,y\in M_s$ and $d(x,y)=\infty$ if $x\in M_s$ and $y\in M_t$, $s\neq t$. Let $\mathcal{LSS}$ be the large scale structure induced from the infinity metric $d$. Then $(X,\mathcal{LSS})$ is an asymptotic filtered colimit of $\{X_F, \mathcal{LSS}_F\}_{F\in \mathcal{P}_{fin}(S)}$, where $\mathcal{P}_{fin}(S)$ is the collection of all finite subsets of $S$, $X_F=\bigcup\limits_{s\in F} M_s$, and $\mathcal{LSS}_F$ is the large scale structure induced from the $\infty$ metric $d_F(x,y)=d^{M}_s(x,y)$ if $x,y\in M_s$ and $d_F(x,y)=\infty$ if $x\in M_s$, $y\in M_t$, $s\neq t$.
\end{Example}

The second example is a useful one to keep in mind when dealing with the asymptotic filtered colimit $\mathcal{LSS}$ of $\{X_s, \mathcal{LSS}_s\}_{s\in S}$; points within the same $X_s$ behave with respect to $\mathcal{LSS}_s$, while two points with one in $X_s$ and one outside of $X_s$ in certain circumstances may be regarded as very far away with respect to $\mathcal{LSS}$. Asymptotic filtered colimits can also be formed by "building up to $X$ from smaller $X_s$'s. This was shown in example 1.
The following proposition shows that large scale continuous functions of the asymptotic filtered colimit of $\{(X_s,\mathcal{LSS}_s)\}$ are precisely functions that are large scale continuous on every restriction to $(X_s,\mathcal{LSS}_s)$.

\begin{Proposition}\label{afcborn}
	Suppose $X$ is a set and $\mathcal{LSS}_X$ is the asymptotic filtered colimit
	of subsets $\left\{(X_s, \mathcal{LSS}_s)\right\}_{s\in S}$ of $X$ and $f:X\to Y$ is a function 
	to a large scale space $Y$. $f$ is bornologous if and only if $f|_{X_s}$ is bornologous for each $s$.
\end{Proposition}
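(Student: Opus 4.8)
The plan is to prove the two implications separately, relying on the two remarks recorded just after Definition~\ref{afc}: first, that $\mathcal{LSS}_s \subseteq \mathcal{LSS}_X$ for every $s \in S$, and second, that every $\mathcal{U} \in \mathcal{LSS}_X$ has an associated index $s$ for which $\mathcal{U}^* \in \mathcal{LSS}_s$, where $\mathcal{U}^*$ is $\mathcal{U}$ with all one-point sets lying outside $X_s$ deleted. The forward implication will be immediate, while the reverse will require care in tracking the singletons that distinguish $\mathcal{U}$ from $\mathcal{U}^*$.

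For the forward direction I would assume $f$ is bornologous and fix $s \in S$. To show $f|_{X_s}$ is bornologous, take an arbitrary $\mathcal{W} \in \mathcal{LSS}_s$ and note that, since every member of $\mathcal{W}$ is a subset of $X_s$, the family $f|_{X_s}(\mathcal{W})$ coincides with $f(\mathcal{W})$. Because $\mathcal{LSS}_s \subseteq \mathcal{LSS}_X$, we have $\mathcal{W} \in \mathcal{LSS}_X$, so bornologousness of $f$ gives $f(\mathcal{W}) \in \mathcal{LSS}_Y$ directly.

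For the reverse direction I would assume $f|_{X_s}$ is bornologous for every $s$ and fix an arbitrary $\mathcal{U} \in \mathcal{LSS}_X$, the goal being $f(\mathcal{U}) \in \mathcal{LSS}_Y$. By the second remark there is an $s$ with $\mathcal{U}^* \in \mathcal{LSS}_s$, and since $\mathcal{U}^*$ consists of subsets of $X_s$, applying the bornologous map $f|_{X_s}$ yields $f(\mathcal{U}^*) \in \mathcal{LSS}_Y$. The remaining step is to pass from $f(\mathcal{U}^*)$ back to $f(\mathcal{U})$. Here I would invoke the first axiom of a large scale structure on $Y$: every element of $f(\mathcal{U})$ having more than one point is of the form $f(U)$ with $|U| > 1$, and any such $U$ survives the deletion defining $\mathcal{U}^*$ (only singletons are removed), so $f(U)$ already belongs to $f(\mathcal{U}^*)$ and is trivially contained in a member of it. The first axiom then promotes $f(\mathcal{U}^*) \in \mathcal{LSS}_Y$ to $f(\mathcal{U}) \in \mathcal{LSS}_Y$.

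I expect the main obstacle to be precisely this last bookkeeping: $f(\mathcal{U})$ and $f(\mathcal{U}^*)$ can genuinely differ, since $f$ need not be injective and each deleted singleton $\{x\}$ with $x \notin X_s$ still contributes a singleton image $\{f(x)\}$ to $f(\mathcal{U})$. The crux of the argument is the observation that all of these extra members are one-point sets, hence invisible to the large scale structure on $Y$ by its first (refinement-modulo-singletons) axiom; this is exactly the feature emphasized in the remark following the definition of a large scale structure.
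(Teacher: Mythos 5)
Your proposal is correct and follows essentially the same route as the paper's proof: the forward direction uses $\mathcal{LSS}_s \subseteq \mathcal{LSS}_X$ together with $f(\mathcal{W}) = f|_{X_s}(\mathcal{W})$, and the reverse direction passes through $\mathcal{U}^*$ and then upgrades $f(\mathcal{U}^*) \in \mathcal{LSS}_Y$ to $f(\mathcal{U}) \in \mathcal{LSS}_Y$ via the observation that $|f(U)|>1$ forces $|U|>1$, so $f(U)$ already lies in $f(\mathcal{U}^*)$. The final bookkeeping step you flag is exactly the one the paper carries out.
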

	
\begin{proof}
	$\left( \Rightarrow\right):$ Let $s\in S$ and $\mathcal{U}_s\in \mathcal{LSS}_s$. Then notice that $\mathcal{U}_s\in \mathcal{LSS}_X$ which implies that $f(\mathcal{U}_s)\in \mathcal{LSS}_Y$. Since $U_s\in\mathcal{U}_s$ gives us $U_s\subseteq X_s$, we have $f(\mathcal{U}_s)=f|_{X_s}(\mathcal{U}_s)$.\\
	$\left( \Leftarrow \right):$ Let $\mathcal{U}\in \mathcal{LSS}_X$. Then there is an $s\in S$ and a $\mathcal{V}\in \mathcal{LSS}_s$ such that for any $U\in \mathcal{U}$ with $|U|>1$, there is a $V\in \mathcal{V}$ such that $U\subseteq V$.
	Define $\mathcal{U^*}=\mathcal{U}\setminus\left\{U\in \mathcal{U}~|~U=\{x\},~x\in X\setminus X_s\right\}$. Then $\mathcal{U^*}\in\mathcal{LSS}_s$ and $f(\mathcal{U^*})=f|_{X_s}(\mathcal{U^*})$. So $f(\mathcal{U^*})\in \mathcal{LSS}_Y$.
	We show that if $f(U)\in f(\mathcal{U})$ with $|f(U)|>1$, then $f(U)\in f(\mathcal{U^*})$. Indeed, $|f(U)|>1$ implies $|U|>1$ and hence $U\in\mathcal{U^*}$ which implies $f(U)\in f(\mathcal{U^*})$. So $f(\mathcal{U})\in \mathcal{LSS}_Y$.
\end{proof}

It turns out that slowly oscillating functions behave similarly to large scale continuous functions with respect to asymptotic filtered colimits. The following definitions are slight generalizations of the ones found in \cite{Weighill}

\begin{Definition}
	Let $\left(X,\mathcal{LSS}\right)$ be given and let $\mathcal{U}\in \mathcal{LSS}$.
	We say a $\mathbfcal{U}$\textbf{-chain component} of $X$ is an equivalence class of the following equivalence relation. $x\sim y$ if and only if there is a finite sequence $\left\{U_i \right\}_{i=1}^{n} \subseteq \mathcal{U}$ such that $U_i\cap U_{i+1} \neq \varnothing$ for every $i$ and $x\in U_1$ along with $y\in U_n$.
	A \textbf{coarse chain component} of $x\in X$ is the union of its $\mathcal{U}$-chain components, where $\mathcal{U}$ ranges over every uniformly bounded family of $\mathcal{LSS}$. 
	A subset $B\subseteq X$ is called \textbf{weakly bounded} if its intersection with each coarse chain component is contained in some $U$ for $U\in \mathcal{U}$ and $\mathcal{U}\in \mathcal{LSS}$.
\end{Definition}

\begin{Definition}
	Let $f:X\to Y$ where $\left(X,\mathcal{LSS}\right)$ is a large scale structure and $Y$ is a metric space. $f$ is \textbf{slowly oscillating} if $\forall \mathcal{U}\in \mathcal{LSS}$ and $\epsilon >0~\exists B\subseteq X$ weakly bounded such that for any $U\in \mathcal{U}$ with $U\not\subseteq B$ implies $\diam(f(U))<\epsilon$.
\end{Definition}

\begin{Proposition}\label{afcso}
	Let $X$ be a set and let $\mathcal{LSS}$ be the asymptotic filtered colimit of $\left\{\left(X_s, \mathcal{LSS}_s\right)\right\}_{s\in S}$. Let $Y$ be a metric space and let $f:X\to Y$. Then $f$ is slowly oscillating if and only if $f|_{X_s}$ is slowly oscillating for all $s\in S$.
\end{Proposition}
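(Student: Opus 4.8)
The plan is to mirror the proof of Proposition \ref{afcborn}, reducing the biconditional to a single structural fact about weak boundedness: \emph{for a subset $B \subseteq X_s$, $B$ is weakly bounded in $\mathcal{LSS}$ if and only if $B$ is weakly bounded in $\mathcal{LSS}_s$}. Once this is established, both implications fall out by transporting the witnessing weakly bounded set between the two structures and handling one-point members of families separately, since any $U$ with $|U|\le 1$ automatically has $\diam(f(U))=0<\epsilon$ and so never obstructs slow oscillation. In the forward direction I use the remark after Definition \ref{afc} that $\mathcal{LSS}_s\subseteq\mathcal{LSS}$; in the reverse direction I use that every $\mathcal{U}\in\mathcal{LSS}$ has a witnessing index $s$ with $\mathcal{U}^{*}\in\mathcal{LSS}_s$.

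The heart of the argument, and what I expect to be the main obstacle, is a clean description of coarse chain components. I would first record the standard fact that $w$ lies in the coarse chain component of $z$ exactly when $\{z,w\}$ is contained in a single member of some uniformly bounded family. One direction is trivial; the other is where the star axiom does the work: given a $\mathcal{U}$-chain $U_1,\dots,U_n$ from $z$ to $w$, finitely many iterations of $\st(\mathcal{U},\mathcal{U})$ absorb the whole chain into one member, each star roughly doubling the length of chain a single set can swallow, and closure of $\mathcal{LSS}$ under $\st$ keeps us inside the structure. This same chain-absorption fact holds verbatim in each $\mathcal{LSS}_s$. I then compare boundedness of pairs across the two structures using the compatibility of restrictions: if $z,w\in X_s$ and $\{z,w\}$ is contained in a member $U$ of some $\mathcal{U}\in\mathcal{LSS}$, then $U\subseteq X_{t_0}$ for some $t_0$ with $\mathcal{U}^{*}\in\mathcal{LSS}_{t_0}$; enlarging $t_0$ to some $t$ with $X_s\subseteq X_t$ and restricting, $\mathcal{U}^{*}|_{X_s}\in\mathcal{LSS}_t|_{X_s}=\mathcal{LSS}_s$ exhibits $\{z,w\}$ as bounded in $\mathcal{LSS}_s$. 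This yields both $C(z)\cap X_s = C_s(z)$ for $z\in X_s$ (writing $C$, $C_s$ for coarse chain components in $\mathcal{LSS}$, $\mathcal{LSS}_s$) and the statement that a subset of $X_s$ is bounded in $\mathcal{LSS}$ iff it is bounded in $\mathcal{LSS}_s$. Combining these, for $B\subseteq X_s$ one gets $B\cap C(z)=B\cap C_s(z)$, and boundedness of this common set is the same whether computed in $\mathcal{LSS}$ or $\mathcal{LSS}_s$, which is precisely the weak-boundedness equivalence.

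With the lemma in hand I would assemble the two implications. For $(\Leftarrow)$, given $\mathcal{U}\in\mathcal{LSS}$ and $\epsilon>0$, choose $s$ with $\mathcal{U}^{*}\in\mathcal{LSS}_s$, apply slow oscillation of $f|_{X_s}$ to $\mathcal{U}^{*}$ to get a weakly bounded $B\subseteq X_s$ controlling the diameters of the members of $\mathcal{U}^{*}$, promote $B$ to a weakly bounded set of $\mathcal{LSS}$ by the lemma, and note that the remaining members of $\mathcal{U}$ are one-point sets and so need no control. For $(\Rightarrow)$, fix $s$ and take $\mathcal{U}_s\in\mathcal{LSS}_s\subseteq\mathcal{LSS}$ with $\epsilon>0$; a weakly bounded witness $B$ in $\mathcal{LSS}$ may be replaced by $B\cap X_s$, which is still weakly bounded in $\mathcal{LSS}$ as a subset of a weakly bounded set and hence weakly bounded in $\mathcal{LSS}_s$ by the lemma. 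Since every $U\in\mathcal{U}_s$ satisfies $U\subseteq X_s$, the conditions $U\not\subseteq B$ and $U\not\subseteq B\cap X_s$ coincide, so the diameter bound transfers and $f|_{X_s}$ is slowly oscillating. The subset-stability of weak boundedness and the one-point bookkeeping are routine; the only genuine content is the coarse-chain-component lemma, whose proof rests on the star axiom and the compatibility of the restricted structures.
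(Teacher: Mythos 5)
Your proposal is correct, and its skeleton --- pass to $\mathcal{U}^{*}$, transport the witnessing weakly bounded set between $\mathcal{LSS}$ and $\mathcal{LSS}_s$, and dismiss one-point members of $\mathcal{U}$ since their images have diameter $0$ --- is the same as the paper's. Where you genuinely diverge is that the paper simply \emph{asserts} the two transfers of weak boundedness (in the forward direction it declares $B\cap X_s$ to be a valid weakly bounded subset of $(X_s,\mathcal{LSS}_s)$, and in the reverse direction it declares that a $B\subseteq X_s$ weakly bounded for $\mathcal{LSS}_s$ is weakly bounded for $\mathcal{LSS}$), whereas you isolate this as the real content and prove it. Your lemma is exactly what is needed: the characterization of coarse chain components via pair-boundedness (obtained by iterating $\st(\mathcal{U},\mathcal{U})$ to absorb a chain of length $n$ in about $\log_2 n$ steps, staying inside $\mathcal{LSS}$ by the star axiom), the identity $C(z)\cap X_s=C_s(z)$ for $z\in X_s$ via the compatibility of restrictions on intersections together with an enlarging index $t$ with $X_s\cup X_{t_0}\subseteq X_t$, and the observation that a subset of $X_s$ is bounded in $\mathcal{LSS}$ iff it is bounded in $\mathcal{LSS}_s$. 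What your approach buys is a rigorous proof of a step the paper leaves implicit; what it costs is the extra chain-absorption machinery, which is standard but not recorded anywhere in the paper. I see no gap in your argument --- the only point worth making explicit when writing it up is that in the pair-transfer step a member $U$ containing $\{z,w\}$ with $z\neq w$ has $|U|>1$ and hence lies in $X_{t_0}$, so the restriction argument applies, while the case $z=w$ is vacuous.
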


\begin{proof}
	$\left( \Rightarrow\right):$ Let $\mathcal{U}_s\in\mathcal{LSS}_s$ and $\epsilon >0$. Then there is a $B\subseteq X$ weakly bounded such that for any $\mathcal{U}_s\in \mathcal{LSS}_s$ with $U_s \not\subseteq B$ implies $\diam(f(U_s))<\epsilon$.
	But $U_s \subseteq X_s$ implies $f(U_s)=f|_{X_s}(U_s)$ and we are done with choice of weakly bounded subset $B\cap X_s$. 
	Indeed, suppose $U_s\in \mathcal{U}_s$ and $U\not\subseteq \left(B\cap X_s \right)$. Then since $U_s \subseteq X_s$, we have that $U_s \not\subseteq B$ which implies $\diam(f(U_s))=\diam(f|_{X_s}(U_s))<\epsilon$.\\
	$\left( \Leftarrow\right):$ Let $\mathcal{U}\in \mathcal{LSS}$ and $\epsilon >0$. Then there is an $s\in S$ and $\mathcal{V}\in \mathcal{LSS}_s$ such that for any $U\in \mathcal{U}$ with $|U|>1$ implies $U\subseteq V$ for $V\in \mathcal{V}$.
	Define $\mathcal{U^*}$ to be $\mathcal{U}$ with one point sets removed outside of $X_s$. Then $\mathcal{U^*}\in \mathcal{LSS}_s$ which implies there is a $B\subseteq X_s \subseteq X$ weakly bounded such that for any $U\in \mathcal{U^*}$ with $U\not\subset B$ we have that $\diam(f(U))<\epsilon$. 
	Notice that for any $U\in \mathcal{U}\setminus\mathcal{U}^*$ with $U\not\subseteq B$, we have that $\diam(f(U))=0<\epsilon$. Therefore, $B$ is a choice of a weakly bounded set with the property that for any $U\in \mathcal{U}$ with $U\not\subset B$, we have $\diam(f(U))<\epsilon$. So $f$ is slowly oscillating.
\end{proof}

We now will showcase various coarse properties that are preserved by the asymptotic filtered colimit We will begin with metrizability of coarse spaces. For completeness, we remind the reader of the following from \cite{Dydak}. In particular, this statement is a combination of proposition 1.6 and theorem 1.8 in the paper cited:

\begin{Proposition}\label{metlss}
	Let $\mathcal{LSS}$ be a large scale structure on a set $X$ and suppose there exists a set of families of $X$, $\mathcal{LSS}'$, such that for any $\mathcal{B}_1,\mathcal{B}_2\in \mathcal{LSS}'$
	there exists $\mathcal{B}_3\in \mathcal{LSS}'$ such that $\mathcal{B}_1\cup \mathcal{B}_2\cup \st \left(\mathcal{B}_1,\mathcal{B}_2 \right)$ refines $\mathcal{B}_3$.
	Then if the cardinality of $\mathcal{LSS}'$ is countable, then $\mathcal{LSS}$ is metrizable as a coarse space.
\end{Proposition}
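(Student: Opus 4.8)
The plan is to reduce the hypothesis to the existence of a single \emph{nested, cofinal} sequence of scales enjoying a self-star-refinement property, and then to manufacture a metric from that sequence by Frink's chain construction. First I would enumerate $\mathcal{LSS}'=\{\mathcal{B}_n\}_{n\in\mathbb{N}}$ (if $\mathcal{LSS}'$ is finite the structure is bounded and a bounded metric works trivially, so assume it is countably infinite). Using the hypothesis I would inductively build a nested sequence $\mathcal{C}_1\prec\mathcal{C}_2\prec\cdots$ in $\mathcal{LSS}'$ as follows: set $\mathcal{C}_1=\mathcal{B}_1$; given $\mathcal{C}_n$, apply the hypothesis to the pair $(\mathcal{C}_n,\mathcal{C}_n)$ to obtain $\mathcal{D}_n\in\mathcal{LSS}'$ with $\st(\mathcal{C}_n,\mathcal{C}_n)\prec\mathcal{D}_n$, and then apply it once more to $(\mathcal{D}_n,\mathcal{B}_{n+1})$ to obtain $\mathcal{C}_{n+1}\in\mathcal{LSS}'$ coarsening both $\mathcal{D}_n$ and $\mathcal{B}_{n+1}$. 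By construction $\st(\mathcal{C}_n,\mathcal{C}_n)\prec\mathcal{C}_{n+1}$ and each $\mathcal{B}_n$ is refined by $\mathcal{C}_n$, so $\{\mathcal{C}_n\}$ is cofinal in $\mathcal{LSS}'$; interpreting $\mathcal{LSS}'$ as a basis for $\mathcal{LSS}$ (as in Dydak's Proposition 1.6), $\{\mathcal{C}_n\}$ is then cofinal in all of $\mathcal{LSS}$.

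Second, I would build the metric. Define a symmetric gauge $g:X\times X\to[0,\infty]$ by $g(x,x)=0$ and, for $x\neq y$, $g(x,y)=2^{n}$ where $n$ is the least index with $x,y\in C$ for some $C\in\mathcal{C}_n$ (and $g(x,y)=\infty$ if no such $n$ exists, allowing an $\infty$-metric exactly as in Example 2). This $g$ records the first scale at which $x$ and $y$ become comparable but need not satisfy the triangle inequality, so I would pass to the associated chain distance
$$d(x,y)=\inf\left\{\sum_{i=0}^{k-1} g(x_i,x_{i+1}) : x=x_0,x_1,\dots,x_k=y\right\},$$
which is automatically a pseudometric. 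Symmetry and the triangle inequality are immediate; that $d(x,y)=0$ forces $x=y$ will follow from the lower bound established in the next step.

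Third, I would show the large scale structure induced by $d$ coincides with $\mathcal{LSS}$. The easy direction is that each $\mathcal{C}_n$ is $d$-bounded: if $x,y$ lie in a common $C\in\mathcal{C}_n$ then $d(x,y)\le g(x,y)\le 2^n$, so $\diam(C)\le 2^n$ in $d$; since $\{\mathcal{C}_n\}$ is cofinal, every member of $\mathcal{LSS}$ is $d$-uniformly bounded. The reverse direction---that every $d$-bounded family is refined by some $\mathcal{C}_n$, hence lies in $\mathcal{LSS}$---is the crux, and it rests on a Frink-type comparison lemma asserting $d(x,y)\ge c\,g(x,y)$ for a universal constant $c>0$. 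I would prove this by induction on chain length, and here the self-refinement $\st(\mathcal{C}_n,\mathcal{C}_n)\prec\mathcal{C}_{n+1}$ is exactly what prevents a long chain of short $g$-steps from shortcutting a genuinely large $g(x,y)$: in the relation-theoretic form, $\st(\mathcal{C}_n,\mathcal{C}_n)\prec\mathcal{C}_{n+1}$ says that two consecutive steps lying in common members of $\mathcal{C}_n$ can be merged into a single step lying in a common member of $\mathcal{C}_{n+1}$, and summing the resulting geometric series bounds $g(x,y)$. Consequently a family all of whose nontrivial members have $d$-diameter below a fixed threshold has each such member contained in a member of $\mathcal{C}_n$, hence is refined by $\mathcal{C}_n\in\mathcal{LSS}$.

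The main obstacle is precisely this comparison lemma: getting the constant right in the induction and correctly invoking $\st(\mathcal{C}_n,\mathcal{C}_n)\prec\mathcal{C}_{n+1}$ to collapse chains is the only genuinely delicate part. I expect the bookkeeping to be mildly sensitive to whether one uses a squaring ($\st$) or cubing self-refinement, but since the coarse category only requires comparability of $d$ with $g$ up to a multiplicative constant---not an isometry---the squaring condition furnished by the construction suffices. Once $d\asymp g$ on the relevant range is established, the equality of the two large scale structures, and hence metrizability of $(X,\mathcal{LSS})$ as a coarse space, follow formally.
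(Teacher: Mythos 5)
The paper offers no argument of its own here---its ``proof'' is the citation to Dydak--Hoffland---so there is nothing to compare line by line; judged on its own terms, your outline is a sound reconstruction, via the classical Alexandroff--Urysohn/Frink chain construction, of what that citation delivers. Three remarks. First, you correctly supplied a hypothesis the statement omits: as written nothing ties $\mathcal{LSS}'$ to $\mathcal{LSS}$ (taking $\mathcal{LSS}'$ to consist of the single family of all singletons satisfies the displayed condition for any $\mathcal{LSS}$ whatsoever), so one must read $\mathcal{LSS}'$ as a generating base for $\mathcal{LSS}$, exactly as you do. Second, your flagged worry about squaring versus cubing is the right thing to worry about, but your stated resolution (``coarse comparability up to a multiplicative constant is enough'') is not the correct one: with only a genuine squaring condition on entourages the chain comparison $d\geq c\,g$ can fail for every $c>0$. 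What actually saves the argument is that a star-refinement of \emph{covers} is secretly the cubing condition on the associated entourages: if $(x,y)$, $(y,z)$, $(z,w)$ lie in members $C_1,C_2,C_3$ of $\mathcal{C}_n$ respectively, then $x,w\in\st(C_2,\mathcal{C}_n)$, which is contained in a member of $\mathcal{C}_{n+1}$; with three-fold composition available, the standard split of a chain into two halves and a middle step goes through and yields $d\geq\tfrac{1}{2}g$. You should make that observation explicit, since it is the only nontrivial point. Third, the chain construction is avoidable: once you have the nested sequence with $\st(\mathcal{C}_n,\mathcal{C}_n)\prec\mathcal{C}_{n+1}$, the function $d(x,y)=\min\{\,n\geq 1 : x,y\in C \text{ for some } C\in\mathcal{C}_n\,\}$ (with $d(x,x)=0$) is already a metric, because $d(x,y)=m\leq n=d(y,z)$ forces $x,z\in\st(C,\mathcal{C}_n)$ for a suitable $C\in\mathcal{C}_n$, whence $d(x,z)\leq n+1\leq m+n$; and a set of $d$-diameter at most $R$ is contained in $\st(x_0,\mathcal{C}_{\lceil R\rceil})$, hence in a member of $\mathcal{C}_{\lceil R\rceil+1}$. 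That gives both inclusions with no gauge and no infimum over chains, and is presumably closer to the argument in the cited source. (Your case split when $\mathcal{LSS}'$ is finite is also unnecessary: the same recursion produces an eventually constant nested sequence and the rest of the argument is unchanged.)
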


\begin{proof}
	See \cite{Dydak}.
\end{proof}

\begin{Proposition}\label{afcmet}
	Let $\left(X,\mathcal{LSS}\right)$ be an asymptotic filtered colimit of $\left\{\left(X_s,\mathcal{LSS}_s\right)\right\}_{s\in S}$ and that for every $s\in S$ we have that $X_s$ is metrizable as a coarse space. Then if $S$ is countable, then $X$ is metrizable as a coarse space.
\end{Proposition}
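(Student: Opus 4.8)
The plan is to produce a countable subfamily $\mathcal{LSS}'\subseteq\mathcal{LSS}$ that is cofinal in $\mathcal{LSS}$ and closed under the star operation up to refinement, and then invoke Proposition \ref{metlss}.

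First I would fix, for each $s\in S$, a metric $d_s$ inducing $\mathcal{LSS}_s$ (this is what metrizability provides) and set $\mathcal{B}^s_n:=\{A\subseteq X_s : \diam_{d_s}(A)\le n\}$ for $n\in\mathbb{N}$. Each $\mathcal{B}^s_n$ is uniformly bounded, the countable collection $\{\mathcal{B}^s_n\}_{n}$ is cofinal in $\mathcal{LSS}_s$ (any uniformly bounded family has finite diameter, hence refines some $\mathcal{B}^s_n$), and every element of $\mathcal{B}^s_n$ is contained in $X_s$. I would then form the trivial extension $\hat{\mathcal{B}}^s_n:=\mathcal{B}^s_n\cup\{\{x\}:x\in X\}$; by the remark following Definition \ref{afc} each $\hat{\mathcal{B}}^s_n$ lies in $\mathcal{LSS}$. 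Define $\mathcal{LSS}':=\{\hat{\mathcal{B}}^s_n: s\in S,\ n\in\mathbb{N}\}$, which is countable because $S$ is countable. Cofinality of $\mathcal{LSS}'$ in $\mathcal{LSS}$ is immediate: given $\mathcal{U}\in\mathcal{LSS}$, choose $s$ and $\mathcal{V}\in\mathcal{LSS}_s$ coarsening the non-singletons of $\mathcal{U}$ (Definition \ref{afc}), then pick $n$ with $\mathcal{V}\prec\mathcal{B}^s_n$; every non-singleton of $\mathcal{U}$ then sits in an element of $\mathcal{B}^s_n$ and every singleton sits in a singleton, so $\mathcal{U}\prec\hat{\mathcal{B}}^s_n$.

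The heart of the argument, and the step I expect to be the main obstacle, is verifying the star condition of Proposition \ref{metlss} inside $\mathcal{LSS}'$. Given $\hat{\mathcal{B}}^r_m,\hat{\mathcal{B}}^s_n\in\mathcal{LSS}'$, I would use the filtered hypothesis of Definition \ref{afc} to choose $t\in S$ with $X_r\cup X_s\subseteq X_t$. As in the proof of Proposition \ref{afclss}, the coincidence of the restricted structures forces $\mathcal{B}^r_m,\mathcal{B}^s_n\in\mathcal{LSS}_t$; letting $\tilde{\mathcal{B}}^r_m,\tilde{\mathcal{B}}^s_n$ be their trivial extensions to scales of $X_t$, the family $\mathcal{H}:=\st(\tilde{\mathcal{B}}^r_m,\tilde{\mathcal{B}}^s_n)$ lies in $\mathcal{LSS}_t$ by the star axiom. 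The key computation is that every non-singleton element of $\hat{\mathcal{B}}^r_m\cup\hat{\mathcal{B}}^s_n\cup\st(\hat{\mathcal{B}}^r_m,\hat{\mathcal{B}}^s_n)$ is contained in an element of $\mathcal{H}$: for $A\in\hat{\mathcal{B}}^r_m$ one checks $\st(A,\hat{\mathcal{B}}^s_n)=A\cup\st(A,\mathcal{B}^s_n)$, and this coincides with $\st(A,\tilde{\mathcal{B}}^s_n)\in\mathcal{H}$ whenever $A\subseteq X_t$, while the elements of $\mathcal{B}^r_m$ and $\mathcal{B}^s_n$ themselves are absorbed into $\mathcal{H}$ by starring against the appropriate singletons.

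The delicate point here is exactly that the singletons in the colimit range over all of $X$, not merely over $X_t$: one must confirm that a singleton $\{x\}$ with $x\notin X_t$ produces only a singleton star $\st(\{x\},\hat{\mathcal{B}}^s_n)=\{x\}$ (since no element of $\mathcal{B}^s_n\subseteq X_s$ contains $x$), so these extra singletons never create a non-singleton that escapes $\mathcal{H}$. Once this is established, cofinality of $\{\mathcal{B}^t_n\}_n$ in $\mathcal{LSS}_t$ yields an $N$ with $\mathcal{H}\prec\mathcal{B}^t_N$, whence $\hat{\mathcal{B}}^r_m\cup\hat{\mathcal{B}}^s_n\cup\st(\hat{\mathcal{B}}^r_m,\hat{\mathcal{B}}^s_n)\prec\hat{\mathcal{B}}^t_N\in\mathcal{LSS}'$. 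Taking $\mathcal{B}_3=\hat{\mathcal{B}}^t_N$ verifies the hypothesis of Proposition \ref{metlss}, and since $\mathcal{LSS}'$ is countable we conclude that $X$ is metrizable as a coarse space.
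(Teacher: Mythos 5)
Your proof is correct and follows essentially the same route as the paper: form a countable family by uniting countable cofinal witnesses over the countable index set $S$, push any two of its members into a common $X_t$ via the filtration hypothesis, and verify the star-and-union condition of Proposition \ref{metlss} there. You are in fact more careful than the paper's own argument, which asserts that the two given families already lie in $\mathcal{LSS'}_t$ rather than passing to refinements inside $\mathcal{LSS'}_t$, and which does not track the singleton bookkeeping and cofinality that you make explicit.
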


\begin{proof}
	By \ref{metlss} we have that for every $s\in S$, there is a $\mathcal{LSS'}_s$ such that $|\mathcal{LSS'}_s|$ is countable and $\forall \mathcal{B}_1^s, \mathcal{B}_2^s \in \mathcal{LSS'}_s~\exists \mathcal{B}_3^s$ such that $\mathcal{B}_1^s \cup \mathcal{B}_2^s \cup \st(\mathcal{B}_1^s , \mathcal{B}_2^s)$ is a refinement of $\mathcal{B}_3^s$.
	Let $\mathcal{LSS}'=\bigcup\limits_{s\in S} \mathcal{LSS'}_s$. Then $|\mathcal{LSS}'|$ is countable since the countable union of countable sets is countable.
	Let $\mathcal{A'}_s,\mathcal{B'}_r \in \mathcal{LSS'}$. Then note that there is a $t\in S$ so that $X_r\cup X_s \subseteq X_t$ and $\mathcal{A'}_s , \mathcal{B'}_s \in \mathcal{LSS'}_t$ which implies there is a $\mathcal{W'}_t\in \mathcal{LSS'}_t$ so that $\mathcal{A'}_s \cup \mathcal{B'}_r \cup \st(\mathcal{A'}_s, \mathcal{B'}_r)\in \mathcal{W'}_t$
	Since, $\mathcal{W'}_t\in \mathcal{LSS'}$, we have by \ref{metlss} that $X$ is metrizable as a coarse space.
\end{proof}

We use the following definition of Asymptotic Dimension from \cite{Dydak}:

\begin{Definition}
	Let $\left(X,\mathcal{LSS}\right)$ be a large scale structure. We say $\left(X,\mathcal{LSS}\right)$ has \textbf{asymptotic dimension at most n} if every uniformly bounded family $\mathcal{U}$ in $X$ there is a uniformly bounded coarsening $\mathcal{V}$ such that the multiplicity of $\mathcal{V}$ is at most $n+1$ (i.e. each point $x\in X$ is contained in at most $n+1$ elements of $\mathcal{V}$).
\end{Definition}

\begin{Proposition}\label{afcfad}
	Suppose $X$ is a set and $\mathcal{LSS}$ is the asymptotic filtered colimit
	of subsets $\left\{(X_s, \mathcal{LSS}_s)\right\}_{s\in S}$ of $X$
	The asymptotic dimension of $X$ is at most $n$
	if and only if the asymptotic dimension of every $(X_s,\mathcal{LSS}_s)$ is at most $n$.
\end{Proposition}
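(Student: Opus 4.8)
The plan is to prove both implications by repeatedly exploiting the two facts recorded after Definition~\ref{afc}: first, that $\mathcal{LSS}_s\subseteq\mathcal{LSS}$ for every $s$, and second, that each $\mathcal{U}\in\mathcal{LSS}$ has a ``core'' $\mathcal{U}^*\in\mathcal{LSS}_s$ (for a suitable $s$) obtained by deleting the one-point sets lying outside $X_s$. The singletons will be the only real bookkeeping nuisance: whenever I move a family between the two structures I will strip the offending one-point sets, do the dimension-theoretic work on the core, and then re-attach singletons at the end, checking that re-attaching them keeps the multiplicity at $\max(n+1,1)=n+1$ rather than pushing it beyond.

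For the ($\Leftarrow$) direction, assume $\asdim(X_s,\mathcal{LSS}_s)\le n$ for all $s$ and fix $\mathcal{U}\in\mathcal{LSS}$. I would choose $s$ and $\mathcal{V}\in\mathcal{LSS}_s$ witnessing $\mathcal{U}\in\mathcal{LSS}$, so that every $U\in\mathcal{U}$ with $|U|>1$ lies in some $V\in\mathcal{V}$, and then form $\mathcal{U}^*\in\mathcal{LSS}_s$. Applying $\asdim(X_s)\le n$ to $\mathcal{U}^*$ produces a coarsening $\mathcal{W}_s\in\mathcal{LSS}_s\subseteq\mathcal{LSS}$ of multiplicity $\le n+1$. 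Since $\mathcal{W}_s$ already coarsens every element of $\mathcal{U}$ except possibly the singletons $\{x\}$ with $x\notin X_s$ (and no element of $\mathcal{W}_s$ meets $X\setminus X_s$), I would set $\mathcal{W}=\mathcal{W}_s\cup\{\{x\}: \{x\}\in\mathcal{U},\ x\notin X_s\}$. Closure of $\mathcal{LSS}$ under the first large scale axiom keeps $\mathcal{W}\in\mathcal{LSS}$; the added singletons sit over points of multiplicity $0$ in $\mathcal{W}_s$, so the multiplicity stays $\le n+1$, and $\mathcal{W}$ now coarsens all of $\mathcal{U}$.

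The ($\Rightarrow$) direction is where the compatibility hypothesis does the work, and I expect it to be the main obstacle. Assume $\asdim(X,\mathcal{LSS})\le n$, fix $s$, and take $\mathcal{U}_s\in\mathcal{LSS}_s\subseteq\mathcal{LSS}$. Applying $\asdim(X)\le n$ gives a coarsening $\mathcal{V}\in\mathcal{LSS}$ of multiplicity $\le n+1$, but $\mathcal{V}$ lives in the colimit structure and I must force it back into $\mathcal{LSS}_s$. Choosing $t$ with $\mathcal{V}^*\in\mathcal{LSS}_t$, I would first observe that every $U\in\mathcal{U}_s$ with $|U|>1$ is contained in some $V\in\mathcal{V}$ with $|V|>1$, hence $V\subseteq X_t$ and so $U\subseteq X_s\cap X_t$; thus all the ``large'' behaviour of $\mathcal{U}_s$ is concentrated in $X_s\cap X_t$. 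The crucial step is then to restrict: because every member of $\mathcal{V}^*$ sits inside $X_t$, the family $\mathcal{W}:=\{V\cap X_s : V\in\mathcal{V}^*\}$ equals the restriction of $\mathcal{V}^*$ to $X_s\cap X_t$, which lies in $\mathcal{LSS}_t|_{X_s\cap X_t}=\mathcal{LSS}_s|_{X_s\cap X_t}\subseteq\mathcal{LSS}_s$, the middle equality being exactly the assumed compatibility of the restricted structures and the final inclusion following from the first large scale axiom.

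Finally I would verify the dimension count: $\mathcal{W}$ coarsens every $U\in\mathcal{U}_s$ with $|U|>1$ (since $U\subseteq V\cap X_s$), and its multiplicity is at most that of $\mathcal{V}$, hence $\le n+1$, because both restriction to $X_s$ and deletion of singletons can only decrease multiplicity. Re-attaching the remaining singletons of $\mathcal{U}_s$ exactly as in the other direction yields a coarsening of $\mathcal{U}_s$ inside $\mathcal{LSS}_s$ of multiplicity $\le n+1$, giving $\asdim(X_s)\le n$. The only genuinely delicate points to watch are that singleton-reattachment never raises the multiplicity above $n+1$, and that the equality $\mathcal{W}=\mathcal{V}^*|_{X_s\cap X_t}$ really holds, which is precisely why I isolate the observation that the large members of $\mathcal{V}^*$ cannot leave $X_t$.
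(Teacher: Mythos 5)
Your proof is correct and follows essentially the same route as the paper: strip singletons, coarsen the core in $\mathcal{LSS}_s$, and re-attach singletons for ($\Leftarrow$); restrict the coarsening to $X_s$ for ($\Rightarrow$). The only difference is that you explicitly justify, via an intermediate $X_t$ and the compatibility of restricted structures, why the restricted family lands in $\mathcal{LSS}_s$ --- a step the paper's one-line ($\Rightarrow$) argument leaves implicit.
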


\begin{proof}
	$\left(\Rightarrow\right):$ Let $\mathcal{U}_s\in \mathcal{LSS}_s$.
	Then we have that $\mathcal{U}_s\in \mathcal{LSS}$ and hence $\mathcal{U}_s$ has a coarsening $\mathcal{V}$ with multiplicity at most $n+1$. The desired coarsening is $\mathcal{V'}=\left\{V\cap X_s~|~V\in \mathcal{V}\right\}$ \\
	$\left( \Leftarrow \right):$ Let $\mathcal{U}\in \mathcal{LSS}$. 
	Then there is an $s\in S$ and a $\mathcal{V}\in \mathcal{LSS}_s$ such that for any $U\in \mathcal{U}$ with $|U|>1$, there is a $V\in \mathcal{V}$ such that $U\subseteq V$. Define $\mathcal{U^*}$ as before.
	Then $\mathcal{U^*}\in \mathcal{LSS}_s$ and hence there is a coarsening $\mathcal{W}\in \mathcal{LSS}_s$ with multiplicity at most $n+1$.
	Then the family $\mathcal{W}\cup\left\{U\in \mathcal{U}~|~U=\{x\},~x\in X\setminus X_s\right\}$. is the desired coarsening of $\mathcal{U}$ with multiplicity at most $n+1$.
\end{proof}

Given how nicely asymptotic filtered colimits preserve finite asymptotic dimension, one might wonder if the asymptotic filtered colimit construction preserves asymptotic property C. It is not currently known if this is the case. Below is a definition of asymptotic property C that agrees with the more commonly seen definition for metric spaces. We note here that this generalized definition is preserved under subspaces and is also a coarse invariant:

\begin{Definition}
	Let $(X,\mathcal{LSS})$ be given. We say that $(X,\mathcal{LSS})$ has \textbf{asymptotic property C} or {APC} if for any sequence of uniformly bounded families $\mathcal{U}_1 \prec \mathcal{U}_2 \prec...$ there is a natural number $n$ and $\mathcal{V}_1,...,\mathcal{V}_n\in\mathcal{LSS}$ so that $\bigcup\limits_{i=1}^{n} \mathcal{V}_i$ covers X and for all $j$, $1\leq j\leq n$, $V,V'\in \mathcal{V}_j$ with $V\neq V'$, $\st(V,\mathcal{U}_j)\cap V'=\varnothing$.
\end{Definition}

From the remarks, we get a simple corollary.

\begin{Corollary}
	Let $X$ be a set and let $\mathcal{LSS}$ be the asymptotic filtered colimit of $\left\{\left(X_s, \mathcal{LSS}_s\right)\right\}_{s\in S}$. If $\mathcal{LSS}$ has APC, then $\left(X_s, \mathcal{LSS}_s\right)$ has APC for any $s\in S$.
\end{Corollary}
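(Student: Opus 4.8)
The plan is to recognize this as a direct instance of the fact, recalled just before the definition of APC, that asymptotic property C is inherited by subspaces. The one thing that needs to be checked is that $(X_s,\mathcal{LSS}_s)$ genuinely sits inside $(X,\mathcal{LSS})$ as a coarse subspace, i.e. that the restriction $\mathcal{LSS}|_{X_s}:=\{\mathcal{U}\in\mathcal{LSS}\mid \bigcup\mathcal{U}\subseteq X_s\}$ coincides with $\mathcal{LSS}_s$. One inclusion is immediate from the remark following Definition~\ref{afc}: since $\mathcal{LSS}_s\subseteq\mathcal{LSS}$ and every member of $\mathcal{LSS}_s$ is a family of subsets of $X_s$, we get $\mathcal{LSS}_s\subseteq\mathcal{LSS}|_{X_s}$.

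For the reverse inclusion I would take $\mathcal{U}\in\mathcal{LSS}$ consisting of subsets of $X_s$. By the definition of the colimit there is a $t\in S$ and $\mathcal{V}\in\mathcal{LSS}_t$ so that every $U\in\mathcal{U}$ with $|U|>1$ lies in some $V\in\mathcal{V}$; in particular every such $U$ is contained in $X_s\cap X_t$. This is exactly where the compatibility hypothesis enters: because the restrictions of $\mathcal{LSS}_t$ and $\mathcal{LSS}_s$ to $X_s\cap X_t$ coincide, the witnessing family $\mathcal{V}$ can be replaced by a uniformly bounded family in $\mathcal{LSS}_s$ bounding the same nontrivial members of $\mathcal{U}$, and reabsorbing the one-point sets via the first axiom of a large scale structure yields $\mathcal{U}\in\mathcal{LSS}_s$. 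This is the same restriction argument already carried out in the $(\Rightarrow)$ direction of Proposition~\ref{afcfad}, so it is routine bookkeeping.

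With the identification $\mathcal{LSS}|_{X_s}=\mathcal{LSS}_s$ in hand, the corollary follows from the quoted subspace-invariance of APC: since $(X,\mathcal{LSS})$ has APC and $(X_s,\mathcal{LSS}_s)$ is a subspace, $(X_s,\mathcal{LSS}_s)$ has APC. If one prefers to avoid invoking that invariance statement, the same conclusion can be produced directly. Given a chain $\mathcal{U}_1\prec\mathcal{U}_2\prec\cdots$ in $\mathcal{LSS}_s$, view it inside $\mathcal{LSS}$, apply APC of $\mathcal{LSS}$ to obtain $n$ and $\mathcal{V}_1,\dots,\mathcal{V}_n\in\mathcal{LSS}$ whose union covers $X$ and which satisfy the stated $\st$-disjointness, and then set $\mathcal{V}_j^{s}=\{V\cap X_s\mid V\in\mathcal{V}_j\}$. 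These lie in $\mathcal{LSS}_s$ and cover $X_s$, and for distinct members $V\cap X_s\neq V'\cap X_s$ one may pick distinct preimages $V,V'\in\mathcal{V}_j$; since $V\cap X_s\subseteq V$ we get $\st(V\cap X_s,\mathcal{U}_j)\cap(V'\cap X_s)\subseteq\st(V,\mathcal{U}_j)\cap V'=\varnothing$, which is precisely APC for $X_s$. The only genuine content is the subspace identification, and within it the reverse inclusion; this is the sole place the compatibility condition is actually used, and I anticipate no real obstacle beyond stating that identification cleanly.
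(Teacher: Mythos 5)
Your proposal is correct and follows the same route as the paper, which derives the corollary directly from the remark that APC is preserved under subspaces; the paper offers no further argument. Your verification that $\mathcal{LSS}|_{X_s}=\mathcal{LSS}_s$ (and the optional direct restriction argument) supplies detail the paper leaves implicit, and both check out.
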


\begin{Question}
	Let $X$ be a set and let $\mathcal{LSS}$ be the asymptotic filtered colimit of $\left\{\left(X_s, \mathcal{LSS}_s\right)\right\}_{s\in S}$. If for all $s\in S,~ (X_s,\mathcal{LSS}_s)$ has APC, then does $(X,\mathcal{LSS})$ have APC?
\end{Question}

\begin{Question}
	Suppose $X$ is a set and $\mathcal{LSS}$ is the asymptotic filtered colimit of subsets $\left\{(X_s, \mathcal{LSS}_s)\right\}_{s\in S}$ of $X$ and that the asymptotic dimension of each $(X_s,\mathcal{LSS}_s)$ is finite. Does $(X,\mathcal{LSS})$ have asymptotic property C?
\end{Question}

We'll now show that exactness is preserved by the asymptotic filtered colimit construction. We remind the reader of the following definitions. The following is adapted from \cite{Guentner}:

\begin{Definition}
	Let $X$ be a set. We say $\left(f_i\right)_{i\in I}$ is a \textbf{partition of unity} of $X$ if $f_i:X\to [ 0,\infty )$ for all $i$ and for all $x\in X$, $\sum\limits_{i\in I} f_i(x)=1$.
\end{Definition}

The following definition is adapted from \cite{Guentner}:

\begin{Definition}
	Let $(X,\mathcal{LSS})$ be a large scale structure. $(X,\mathcal{LSS})$ is \textbf{exact} if for every $\mathcal{U}\in\mathcal{LSS}$ and $\epsilon >0$ there exists a partition of unity $(f_i)_{i\in I}$ of $X$ so that
	the cover of $X$, $\mathcal{V}=\left\{\mathrm{support}(f_n)~|~i\in I\right\}$, is uniformly bounded and that if (for $U\in \mathcal{U}$) $x,y\in U$, then $\sum\limits_{i\in I} |f_i(x)-f_i(y)| <\epsilon$.
\end{Definition}

\begin{Proposition}\label{afcexact}
	Suppose $X$ is a set and $\mathcal{LSS}$ is the asymptotic filtered colimit of subsets $\left\{X_s\right\}_{s\in S}$ of $X$. $(X,\mathcal{LSS})$ is exact if and only if for each $s\in S$, $(X_s,\mathcal{LSS}_s)$ is exact.
\end{Proposition}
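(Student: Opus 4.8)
The plan is to follow the same two-sided template used for Propositions \ref{afcborn}, \ref{afcso}, and \ref{afcfad}: prove the forward implication by restricting a global partition of unity to each $X_s$, and prove the backward implication by feeding $\mathcal{U}^*$ into the exactness of the appropriate $X_s$ and then extending the resulting partition of unity to all of $X$ using one-point indicator functions for the points outside $X_s$.

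For $\left(\Rightarrow\right)$, fix $s\in S$, take $\mathcal{U}_s\in\mathcal{LSS}_s$ and $\epsilon>0$. Since $\mathcal{LSS}_s\subseteq\mathcal{LSS}$, exactness of $X$ yields a partition of unity $(f_i)_{i\in I}$ of $X$ whose support cover $\mathcal{V}=\{\mathrm{support}(f_i)\mid i\in I\}$ lies in $\mathcal{LSS}$ and which satisfies $\sum_i|f_i(x)-f_i(y)|<\epsilon$ whenever $x,y\in U$ for some $U\in\mathcal{U}_s$. I would then set $g_i:=f_i|_{X_s}$. The identity $\sum_i g_i(x)=\sum_i f_i(x)=1$ for $x\in X_s$ shows $(g_i)$ is a partition of unity of $X_s$, its support cover is $\mathcal{V}|_{X_s}=\{\mathrm{support}(f_i)\cap X_s\}$, and the oscillation estimate is inherited verbatim because the elements of $\mathcal{U}_s$ already sit inside $X_s$. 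The one thing to check is that $\mathcal{V}|_{X_s}\in\mathcal{LSS}_s$; this is the general fact that the restriction to $X_s$ of a uniformly bounded family of the colimit is uniformly bounded in $\mathcal{LSS}_s$, which follows by taking the witnessing $\mathcal{W}\in\mathcal{LSS}_t$ for $\mathcal{V}$, intersecting with $X_s$, and using that $\mathcal{LSS}_t$ and $\mathcal{LSS}_s$ agree on $X_s\cap X_t$ together with closure under refinement.

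For $\left(\Leftarrow\right)$, take $\mathcal{U}\in\mathcal{LSS}$ and $\epsilon>0$, choose $s\in S$ and $\mathcal{V}\in\mathcal{LSS}_s$ so that every $U\in\mathcal{U}$ with $|U|>1$ lies in some $V\in\mathcal{V}$, and form $\mathcal{U}^*\in\mathcal{LSS}_s$. Exactness of $(X_s,\mathcal{LSS}_s)$ supplies a partition of unity $(f_i)_{i\in I}$ of $X_s$ with support cover uniformly bounded in $\mathcal{LSS}_s$ and oscillation below $\epsilon$ on the elements of $\mathcal{U}^*$. I would extend each $f_i$ by zero off $X_s$ and adjoin, for every $x\in X\setminus X_s$, the indicator $h_x$ with $h_x(x)=1$ and $h_x\equiv 0$ elsewhere. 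The enlarged family $(f_i)_{i\in I}\cup(h_x)_{x\in X\setminus X_s}$ is a partition of unity of $X$: the sum equals $\sum_i f_i=1$ on $X_s$ and equals $h_x(x)=1$ at each $x\notin X_s$. Its support cover is $\{\mathrm{support}(f_i)\}\cup\{\{x\}\mid x\in X\setminus X_s\}$, which is exactly a uniformly bounded family of $\mathcal{LSS}_s$ together with one-point sets outside $X_s$, hence uniformly bounded in $\mathcal{LSS}$ by the remark following Definition \ref{afc}.

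It then remains to verify the oscillation condition for the enlarged partition of unity against $\mathcal{U}$. If $U\in\mathcal{U}$ has $|U|=1$ the condition is vacuous, since $x,y\in U$ forces $x=y$; if $|U|>1$, then $U\subseteq V\subseteq X_s$ and $U\in\mathcal{U}^*$, so any $x,y\in U$ lie in $X_s$, every $h_z$ vanishes on them, and the sum reduces to $\sum_i|f_i(x)-f_i(y)|<\epsilon$ by the choice of $(f_i)$. The step I expect to require the most care is precisely this bookkeeping in the backward direction: confirming that the adjoined singletons neither spoil the normalization nor enlarge the support cover beyond $\mathcal{LSS}$, and that every $U\in\mathcal{U}$ contributing a nontrivial oscillation is genuinely captured by $\mathcal{U}^*$ inside $X_s$. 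The forward direction's only subtlety, that restriction sends $\mathcal{LSS}$ to $\mathcal{LSS}_s$, is the same lemma implicitly invoked in the earlier propositions.
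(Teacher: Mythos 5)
Your proof is correct and follows essentially the same route as the paper's: restrict the global partition of unity for the forward direction, and for the converse extend the partition of unity from $X_s$ by zero and adjoin singleton indicator functions for points of $X\setminus X_s$. You are in fact slightly more careful than the paper in the forward direction, where the paper simply says "restrict" without verifying that the restricted support cover lies in $\mathcal{LSS}_s$; your appeal to the compatibility of the structures on intersections fills that in correctly.
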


\begin{proof}
	$\left( \Rightarrow \right):$ Let $\mathcal{U}_s\in\mathcal{LSS}_s$ and $\epsilon >0$. Note that for any $s\in S$ $\mathcal{LSS}_s\subseteq\mathcal{LSS}$.  Then we have $\mathcal{U}_s\in \mathcal{LSS}$; since $(X,\mathcal{LSS})$ is exact, we can find the desired partition of unity of $X$. Restrict this partition of unity of $X$ to a partition of unity of $X_s$. This shows that $(X_s,\mathcal{LSS}_s)$ is exact.\\
	$\left( \Leftarrow \right):$ Let $\mathcal{U}\in \mathcal{LSS}$ and $\epsilon >0$. Then there exists $s\in S$ and $\mathcal{V}\in \mathcal{LSS}_s$ such that for every $U\in \mathcal{U}$ with $|U|>1$ there exists a $V\in \mathcal{V}$ so that $V\subseteq U$. 
	Let $\mathcal{U^*}$ be as shown in other proofs. Then $\mathcal{U^*}\in \mathcal{LSS}_s$ which means there is a partition of unity of $X_s$, $\left(f_i\right)_{i\in I}$ so that the family $\left\{\mathrm{support}(f_i)~|~i\in I\right\}$ is uniformly bounded and if $U\in \mathcal{U}$ and $x,y\in U$, then $\sum\limits_{i\in I} |f_i(x)-f_i(y)|<\epsilon$.
	For any value $j\in X\setminus X_s$, define $f_j:X\to [0,\infty)$ via $f_j(j)=1$ and zero elsewhere. Also, for any $i\in I$ extend $f_i:X_s \to [0,\infty)$ to $X$ by defining $f_i(j)=0$ for any $j\in X\setminus X_s$. Let the set $J$ index the various $f_j$'s and let $K=I\cup J$. 
	We claim that $\left(f_k\right)_{k\in K}$ is the desired partition of unity of $X$. Indeed, notice that aside from a collection of one point sets (i.e. $\mathrm{support}(f_j)$ for $j\in J$), we have that the family $\left\{\mathrm{support}(f_k)~|~k\in K\right\}=\left\{\mathrm{support}(f_i)~|~i\in I\right\}\in\mathcal{LSS}_s\subset \mathcal{LSS}$. 
	Now let $U\in \mathcal{U}$. If $|U|=1$, then we have that $x,y\in U$ implies that $x=y$ and thus $\sum\limits_{k\in K} |f_k(x)-f_k(y)|=0< \epsilon$. If $|U|>1$, then we have that $U\subseteq X_s$ and since $\left(f_i\right)_{i\in I}$ is a partition of unity for $X_s$ and that $f_j(U)\equiv 0$, we have that $x,y\in U$ implies $\sum\limits_{k\in K} |f_k(x)-f_k(y)|=\sum\limits_{i\in I} |f_i(x)-f_i(y)|<\epsilon$. Now we show that for every $x\in X$, $\sum\limits_{k\in K} f_k(x)=1$. Suppose $x\in X_s$. then for any $i\in I,~f_i(x)=0$ and there is a unique $j\in J$ so that $f_j(x)=1$. So $\sum\limits_{k\in K}f_k(x)=1$. If $x\in X\setminus X_s$, then we have that for any $j\in J,~f_j(x)=0$ and since $\left(f_i\right)_{i\in I}$ form a partition of unity for $X_s$, we have that $\sum\limits_{k\in K} f_k(x)=\sum\limits_{i\in I} f_i(x)=1$. 
\end{proof}

We will now show that embeddability into separable Hilbert spaces is preserved by the asymptotic filtered colimit construction. The notion of coarse embeddability was introduced in \cite{Yu}.  Recall that for any two separable Hilbert spaces $G$ and $H$, there is an isometric isomorphism between the two. We will also use some pinch space theory. The following definition and theorem is adapted from \cite{Holloway}:

\begin{Definition}
	Let $(X,\mathcal{LSS})$ be a large scale space, $K$ a metric space, and $c>0$. We say $(X,\mathcal{LSS})$ \textbf{c-pinch-spaces to K} if for every $\mathcal{U}\in\mathcal{LSS}$ and $\epsilon>0$ there is a $\mathcal{V}\in\mathcal{LSS}$ and a function $f:X\to K$ so that $\sup\limits_{U\in \mathcal{U}} \diam(f(U)) <\epsilon$ and for every $x,y\in X$ so that $\{x,y\}\not\subseteq V$ for every $V\in \mathcal{V}$ we have that $d_K(f(x),f(y))\geq c$.
\end{Definition}

\begin{Theorem}\label{pinch}
	If $X$ is a metric space, then $X$ coarsely embedds into a Hilbert space if and only if $X$ c-pinch-spaces to a Hilbert space for some $c>0$.
\end{Theorem}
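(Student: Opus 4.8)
\emph{The plan is to} prove the two implications separately, after translating the pinch-space condition into the familiar metric language of control functions. Throughout I identify a uniformly bounded family with a scale: for $\mathcal{U}_R := \{A \subseteq X : \diam(A) \leq R\}$, two points $x,y$ lie in a common member exactly when $d(x,y) \leq R$; and if $\mathcal{V}$ is any uniformly bounded family with $S := \sup_{V \in \mathcal{V}} \diam(V) < \infty$, then $d(x,y) > S$ forces $\{x,y\} \not\subseteq V$ for every $V \in \mathcal{V}$. Recall that a coarse embedding into a Hilbert space $H$ is a map $f$ admitting control functions $\rho_-, \rho_+ : [0,\infty) \to [0,\infty)$ with $\rho_-(t) \to \infty$ and $\rho_-(d(x,y)) \leq \|f(x)-f(y)\| \leq \rho_+(d(x,y))$ for all $x,y$.

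For the forward implication, suppose $g : X \to H$ is a coarse embedding with controls $\rho_\pm$, and fix $c = 1$. Given $\mathcal{U}_R$ and $\epsilon > 0$, I would simply rescale: set $f = t\,g$ with $t = \epsilon/(2\rho_+(R))$, so that $d(x,y) \leq R$ gives $\|f(x)-f(y)\| \leq t\rho_+(R) < \epsilon$ and hence $\sup_{U \in \mathcal{U}_R}\diam(f(U)) < \epsilon$. Then choose $S$ so large that $t\rho_-(S) \geq 1$ (possible since $\rho_- \to \infty$) and take $\mathcal{V} = \mathcal{U}_S$; for $x,y$ sharing no member of $\mathcal{V}$ we have $d(x,y) > S$ and so $\|f(x)-f(y)\| \geq t\rho_-(S) \geq 1 = c$. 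Thus $X$ $1$-pinch-spaces to $H$.

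The substantive direction is the converse. Applying the hypothesis to $\mathcal{U}_n$ and $\epsilon_n = 1/n$ yields, for each $n$, a Hilbert-space map $f_n : X \to H_n$ and a scale $S_n < \infty$ such that $d(x,y) \leq n$ implies $\|f_n(x)-f_n(y)\| < 1/n$ and $d(x,y) > S_n$ implies $\|f_n(x)-f_n(y)\| \geq c$. The difficulty is that the $f_n$ carry \emph{no} upper bound at large scales, so summing them directly cannot produce a bornologous map. I would remove this obstruction by a Schoenberg normalization: since $\|f_n(x)-f_n(y)\|^2$ is conditionally negative definite, the kernel $\exp(-\|f_n(x)-f_n(y)\|^2)$ is positive definite, so there is a map $\psi_n : X \to H_n'$ into the unit sphere with $\|\psi_n(x)-\psi_n(y)\|^2 = 2\bigl(1 - \exp(-\|f_n(x)-f_n(y)\|^2)\bigr)$. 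This is bounded above by $2$ for \emph{all} pairs; the inequality $1 - e^{-s} \leq s$ gives $\|\psi_n(x)-\psi_n(y)\| < \sqrt{2}/n$ when $d(x,y) \leq n$, while $d(x,y) > S_n$ gives $\|\psi_n(x)-\psi_n(y)\| \geq c'' := \sqrt{2(1-e^{-c^2})} > 0$, a positive constant independent of $n$.

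Finally I would assemble these into one map. Fix a basepoint $x_0$ and define $g(x) = \bigl(\psi_n(x)-\psi_n(x_0)\bigr)_n$ into the $\ell^2$-direct sum $\bigoplus_n H_n'$. Splitting each coordinate sum at $n = \lceil d(x,x_0)\rceil$ shows $g(x)$ lies in the direct sum, since the tail terms are bounded by $2/n^2$ and the finitely many head terms by $2$; the same splitting bounds $\|g(x)-g(y)\|^2$ by $2\lceil d(x,y)\rceil + \sum_n 2/n^2$, a finite increasing function of $d(x,y)$, giving an upper control $\rho_+$. The large-scale estimate gives $\|g(x)-g(y)\|^2 \geq (c'')^2\,\#\{n : S_n < d(x,y)\}$, which tends to infinity as $d(x,y) \to \infty$ because each $S_n$ is finite, yielding a proper lower control $\rho_-$. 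Hence $g$ is a coarse embedding, and via the isometric isomorphism of separable Hilbert spaces it lands in a fixed separable Hilbert space. The main obstacle is exactly the missing large-scale upper bound in the pinch-space hypothesis, which the Schoenberg step converts into a uniform bound while preserving both the small-scale contraction and the large-scale separation.
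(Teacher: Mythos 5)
Your argument is correct, but note that the paper itself offers no proof of Theorem \ref{pinch}: it is quoted from Holloway's thesis \cite{Holloway} and used as a black box, so there is nothing internal to compare against. What you have written is a self-contained proof along the standard kernel-theoretic lines (as in Dadarlat--Guentner and Nowak--Yu \cite{NowakYu}): the forward direction is the easy rescaling of the control functions, and the converse combines the Schoenberg normalization $\|\psi_n(x)-\psi_n(y)\|^2=2\bigl(1-e^{-\|f_n(x)-f_n(y)\|^2}\bigr)$ with the $\ell^2$-concatenation $g(x)=\bigl(\psi_n(x)-\psi_n(x_0)\bigr)_n$. The key difficulty you identify is real and your fix is the right one: the pinch-space hypothesis gives no upper control at large scales, and Schoenberg's theorem converts the unbounded maps $f_n$ into sphere-valued maps with the uniform bound $\|\psi_n(x)-\psi_n(y)\|\le\sqrt{2}$ while preserving the contraction at scale $n$ and the separation $c''=\sqrt{2(1-e^{-c^2})}$ beyond scale $S_n$. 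All the estimates check out, including the properness of the lower control (for any $N$, taking $d(x,y)>\max(S_1,\dots,S_N)$ forces $\|g(x)-g(y)\|\ge c''\sqrt{N}$). Three cosmetic points: you should say explicitly that $\rho_\pm$ may be taken nondecreasing with $\rho_+>0$ (standard, but used when you divide by $\rho_+(R)$ and evaluate $\rho_-$ at $S$ rather than at $d(x,y)$); a general uniformly bounded family need only refine some $\mathcal{U}_R$, which your opening identification handles; and the final appeal to separability is unnecessary for the statement as given, since a countable $\ell^2$-direct sum of Hilbert spaces is already a Hilbert space, whereas literal separability of the target would require an additional hypothesis on $X$ (as indeed appears in Theorem \ref{afchilb}, where the $X_s$ are assumed countable).
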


\begin{Theorem}\label{afchilb}
	Let $S$ be a countable index set and let $H$ be a fixed separable Hilbert space. Let $\left( X,\mathcal{LSS}\right)$ be the asymptotic filtered colimit of $\left\{\left(X_s, \mathcal{LSS}_s\right)\right\}_{s\in S}$ with every $X_s$ countable. Then $\left( X,\mathcal{LSS}\right)$ coarsely embedds into $H$ if and only if $\left(X_s, \mathcal{LSS}_s\right)$ coarsely embedds into $H$ for all $s\in S$.
\end{Theorem}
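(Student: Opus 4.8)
The plan is to treat the two implications quite differently: the forward direction is a soft inheritance statement, while the reverse is where the work lies and where the pinch-space machinery (Theorem \ref{pinch}) together with the metrization result (Proposition \ref{afcmet}) must be combined. For the forward implication, suppose $f\colon X\to H$ is a coarse embedding. Since $\mathcal{LSS}_s\subseteq\mathcal{LSS}$ and, by Definition \ref{afc}, the restriction of $\mathcal{LSS}$ to $X_s$ is exactly $\mathcal{LSS}_s$, the inclusion $X_s\hookrightarrow X$ realizes each $X_s$ as a coarse subspace of $X$. The restriction $f|_{X_s}$ is then bornologous and inherits the lower control of $f$, so it is a coarse embedding of $X_s$ into $H$. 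In particular each $X_s$ is coarsely equivalent to the metric space $f(X_s)\subseteq H$, and since metrizability is a coarse invariant, each $X_s$ is metrizable as a coarse space; I record this because it is what feeds the reverse direction.

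For the reverse implication I would first upgrade the hypothesis to metrizability of the colimit. Assuming each $X_s$ coarsely embeds into $H$, the observation above makes each $X_s$ metrizable, and since $S$ is countable, Proposition \ref{afcmet} gives that $X$ itself is metrizable as a coarse space. Thus Theorem \ref{pinch} applies to $X$, and it suffices to prove that $X$ $1$-pinch-spaces to a separable Hilbert space. Fix $\mathcal{U}\in\mathcal{LSS}$ and $\epsilon>0$, and choose $s$ with $\mathcal{U}^*\in\mathcal{LSS}_s$. Because $X_s$ embeds, Theorem \ref{pinch} gives some $c_s>0$ with $X_s$ $c_s$-pinch-spacing; applying that property to $\mathcal{U}^*$ at precision $\epsilon c_s$ and scaling the resulting map by $1/c_s$, I may assume $X_s$ $1$-pinch-spaces, producing $\mathcal{V}_s\in\mathcal{LSS}_s$ and $f_s\colon X_s\to H$ with $\sup_{U\in\mathcal{U}^*}\diam(f_s(U))<\epsilon$ and $\|f_s(x)-f_s(y)\|\ge 1$ whenever $\{x,y\}$ escapes every $V\in\mathcal{V}_s$. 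Since $X_s$ is countable, the target may be taken separable, hence identified with $H$.

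The core step is to assemble these local pinching maps into one map on $X$. Identify $H$ with $H\oplus\ell^2$ (all separable Hilbert spaces are isometrically isomorphic) and define $f\colon X\to H\oplus\ell^2$ by $f(x)=(f_s(x),0)$ for $x\in X_s$ and $f(x)=(0,e_{k(x)})$ for $x\in X\setminus X_s$, where $\{e_k\}$ is an orthonormal basis of $\ell^2$ and $k\colon X\setminus X_s\to\mathbb{N}$ is an injection, available because $X$ is a countable union of countable sets. Let $\mathcal{V}$ be the trivial extension of $\mathcal{V}_s$, which lies in $\mathcal{LSS}$ by the remarks after Definition \ref{afc}. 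Every $U\in\mathcal{U}$ with $|U|>1$ lies in $X_s$, so $\diam(f(U))=\diam(f_s(U))<\epsilon$, giving the oscillation bound. For separation, a pair $\{x,y\}$ escapes every $V\in\mathcal{V}$ exactly when either both points lie in $X_s$ and escape every $V\in\mathcal{V}_s$, so $\|f(x)-f(y)\|=\|f_s(x)-f_s(y)\|\ge 1$, or at least one point lies outside $X_s$ with $x\neq y$, in which case orthogonality gives $\|f(x)-f(y)\|=\sqrt{\|f_s(x)\|^2+1}\ge 1$ or $\sqrt{2}\ge 1$. Hence $X$ $1$-pinch-spaces, and Theorem \ref{pinch} yields the desired embedding into $H$.

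The hard part is precisely the mixed and exterior separation cases: the image $f_s(X_s)$ need not be bounded in $H$, so the exterior points cannot be pushed uniformly far from $f_s(X_s)$ within the span of $f_s(X_s)$. Allocating a fresh orthogonal summand $\ell^2$ and parking the countably many exterior points on distinct unit basis vectors there forces a uniform separation of $1$ from \emph{everything} at once, regardless of how $f_s$ spreads out — this is the key idea that makes a single pinching constant work globally. The remaining verifications, namely that $\mathcal{V}\in\mathcal{LSS}$, that the normalization to $c=1$ and to a separable target is legitimate, and that the countability hypotheses supply the injection $k$ and Proposition \ref{afcmet}, are routine and I would not belabor them.
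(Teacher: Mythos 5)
Your proposal is correct and follows essentially the same route as the paper: normalize to $1$-pinch-spacing on the relevant $X_s$, park the points of $X\setminus X_s$ on distinct orthonormal basis vectors of a fresh orthogonal summand, take the trivial extension of $\mathcal{V}_s$, and verify the oscillation and three separation cases before composing with an isometric isomorphism back to $H$. Your one addition — explicitly invoking Proposition \ref{afcmet} to establish that $X$ is metrizable so that Theorem \ref{pinch} legitimately applies to $X$ — is a point the paper's proof passes over silently, and it is worth keeping.
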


\begin{proof}
	$\left( \Rightarrow \right):$ This follows via restriction of the embedding function $f:X\to H$ to any $X_s$.\\
	$\left( \Leftarrow \right):$ Note that $\bigoplus\limits_{s\in S} H\cong H$ since $S$ is countable. Likewise, $H\oplus H \cong H$. We show $\left(X, \mathcal{LSS}\right)$ 1-pinch-spaces to $H$.
	Let $\mathcal{U}\in \mathcal{LSS}$ and $\epsilon >0$. Define $\mathcal{U}^*$ to be $\mathcal{U}$ with one point sets removed. Then by definition of $\mathcal{LSS}$, $\mathcal{U}^*\in \mathcal{LSS}_s$ for some s.
	Since $\left(X_s, \mathcal{LSS}_s\right)$ 1-pinch-spaces to H, there exists $f_{\epsilon, s}^{\mathcal{U}^*}:X_s\to H$ and $\mathcal{W}_s \in \mathcal{LSS}_s$ such that $\sup\limits_{U\in\mathcal{U}^*} \diam({f_{\epsilon, s}^{\mathcal{U}^*}(U)}) <\epsilon$ 
	and for any $x,y\in X_s$ with $\{x,y\}\not\subseteq W$ for every $W\in \mathcal{W}_s$ we have $\|f_{\epsilon, s}^{\mathcal{U}^*}(x)-f_{\epsilon, s}^{\mathcal{U}^*}(y)\|\geq 1$ (the norm is in $H$).
	Now, since $X=\bigcup\limits_{s\in S} X_s$ and $X_s$ is countable for every $s$, we may index an orthonormal basis of $H$ via $\left\{e_x\right\}_{x\in X}$.
	Furthermore, define $f_{\epsilon}^{\mathcal{U}}:X\to H\oplus H$ via $f_{\epsilon}^{\mathcal{U}}(x)=\left(f_{s, \epsilon}^{\mathcal{U}^*}(x), 0\right)$ for any $x\in X_s$ and $f_{\epsilon}^{\mathcal{U}}(x)=\left( 0, e_x \right)$ for any $x$ not in $X_s$.
	Define $\mathcal{V}\in \mathcal{LSS}$ as $\mathcal{V}=\mathcal{W}_s \cup \left\{{x}\right\}_{x\in X}$. We will show that $\left(f_{\epsilon}^{\mathcal{U}}, \mathcal{V}\right)$ satisfies the 1-pinch space conditions.
	Note that $\epsilon > \sup\limits_{U\in\mathcal{U}^*} \diam(f_{\epsilon, s}^{\mathcal{U}^*}(U))~=~ \sup\limits_{U\in\mathcal{U}} \diam(f_{\epsilon}^{\mathcal{U}}(U))$ 
	since $|U|>1$ implies $U\subseteq X_s$ and $U\in \mathcal{U}^*$ which implies that $\diam(U)<\epsilon$. If $|U|=1$, then $\diam(f_{\epsilon}^{\mathcal{U}}(U))=0<\epsilon$. Hence, $\sup\limits_{U\in\mathcal{U}} \diam(f_{\epsilon}^{\mathcal{U}}(U))<\epsilon$.
	Now, let $x,y\in X$ so that $\left\{x,y\right\}\not\subseteq V$ for every $V\in \mathcal{V}$. We have three cases:\\
	Suppose $\left\{x,y\right\}\subseteq X\setminus X_s$. Then $f_{\epsilon}^{\mathcal{U}}(x)=(0,e_x)$ and $f_{\epsilon}^{\mathcal{U}}(y)=(0,e_y)$. Then we have that $\|(0,e_x)-(0,e_y)\|_{H\oplus H}=\sqrt{\|0\|_{H}^{2} + \|e_x -e_y\|_{H}^{2}}=\sqrt{2}>1$.\\
	Suppose $\left\{x,y\right\}\subseteq X_s$. Then $\left\{x,y\right\}\not\subseteq V$ for every $V\in \mathcal{V}$ implies that $\left\{x,y\right\}\not\subseteq W$ for every $W\in \mathcal{W}_s$.
	Then we have that $\|f_{\epsilon}^{\mathcal{U}}(x)-f_{\epsilon}^{\mathcal{U}}(y)\|_{H\oplus H}=\sqrt{\|f_{\epsilon, s}^{\mathcal{U}^*}(x)-f_{\epsilon, s}^{\mathcal{U}^*}(y)\|_{H}^{2}+ \|0\|_{H}^{2}}\geq 1$ by assumption that $\left(X_s, \mathcal{LSS}_s\right)$ 1-pinch-spaces to $H$.\\
	Suppose that $x\in X_s$ and $y\in X\setminus X_s$. Then $f_{\epsilon}^{\mathcal{U}}(x)=(f_{\epsilon, s}^{\mathcal{U}^*}(x), 0)$ and $f_{\epsilon}^{\mathcal{U}}(y)=(0,e_y)$.
	Then we have that $\|f_{\epsilon}^{\mathcal{U}}(x)-f_{\epsilon}^{\mathcal{U}}(y)\|_{H\oplus H}= \|(f_{\epsilon, s}^{\mathcal{U}^*}(x),0)-(0,e_y)\|_{H\oplus H}=\sqrt{\|f_{\epsilon, s}^{\mathcal{U}^*}(x)-0\|_{H}^{2} + \|0-e_y\|_{H}^{2}}\geq 1$.\\
	So in all cases, $\|f_{\epsilon}^{\mathcal{U}}(x)-f_{\epsilon}^{\mathcal{U}}(y)\|_{H\oplus H}\geq 1$. Defining $h:X\to H$ to be the composition of $f_{\epsilon}^{\mathcal{U}}$ with the isometric isomorphism from $H\oplus H$ to $H$, we have that $\left(h,\mathcal{V}\right)$ 1-pinch-spaces to $H$ which means that $X$ coarsely embedds into $H$.
\end{proof}

We will now show that coarse amenability is preserved through the asymptotic filtered colimit construction. This definition of coarse amenability is given in \cite{Cencelj}.

\begin{Definition}
	Let $X$ be a set, $A\subseteq X$, and $\mathcal{U}$ a family of subsets of $X$. Then the \textbf{horizon of A against} $\mathcal{U}$, denoted $hor(A,\mathcal{U})$, is the set $\left\{U\in \mathcal{U}| A\cap U\neq \varnothing\right\}$.
\end{Definition}

Here are some useful properties of the horizon that we will use:

\begin{Lemma}\label{hor}
	Let $X$ be a set, $A,B\subseteq X$, and $\mathcal{U}, \mathcal{V}$ be families of subsets of $X$. Then:
	\begin{enumerate}
		\item $A\subseteq B \Rightarrow hor(A,\mathcal{U})\subseteq hor(B, \mathcal{U})$
		\item $\mathcal{U}\prec\mathcal{V}\Rightarrow hor(A,\mathcal{U})\subseteq hor(A,\mathcal{V})$
		\item $A\subseteq B$ and $\mathcal{U}\prec\mathcal{V}\Rightarrow hor(A,\mathcal{U})\subseteq hor(B,\mathcal{V})$.
	\end{enumerate}
\end{Lemma}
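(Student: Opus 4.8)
The plan is to prove each of the three containments by a short element chase through the definition $hor(A,\mathcal{U})=\{U\in\mathcal{U}\mid A\cap U\neq\varnothing\}$ together with the definition of refinement. Before starting, I would settle one point of interpretation, which is really the only subtle part. In item (1) both horizons are subcollections of the \emph{same} family $\mathcal{U}$, so $\subseteq$ is genuine set containment. In items (2) and (3), however, the two horizons sit inside \emph{different} families ($\mathcal{U}$ versus $\mathcal{V}$), and an element of $\mathcal{U}$ need not belong to $\mathcal{V}$ even when $\mathcal{U}\prec\mathcal{V}$; so literal set containment can fail there. I would therefore read the $\subseteq$ in (2) and (3) as the refinement relation $\prec$, which is exactly the notion the downstream coarse-amenability argument needs, and which is consistent with (1) since genuine containment is a special case of refinement.

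For (1) I would take any $U\in hor(A,\mathcal{U})$, so $A\cap U\neq\varnothing$; from $A\subseteq B$ we get $\varnothing\neq A\cap U\subseteq B\cap U$, whence $U\in hor(B,\mathcal{U})$. For (2) I would take $U\in hor(A,\mathcal{U})$ and use $\mathcal{U}\prec\mathcal{V}$ to pick $V\in\mathcal{V}$ with $U\subseteq V$; then $A\cap V\supseteq A\cap U\neq\varnothing$, so $V\in hor(A,\mathcal{V})$, and the containment $U\subseteq V$ exhibits exactly the refinement $hor(A,\mathcal{U})\prec hor(A,\mathcal{V})$. Each of these is a one-line check once the definitions are unwound.

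For (3) I would either run the same chase once more—given $U\in hor(A,\mathcal{U})$, choose $V\in\mathcal{V}$ with $U\subseteq V$ via $\mathcal{U}\prec\mathcal{V}$, and observe $B\cap V\supseteq A\cap V\supseteq A\cap U\neq\varnothing$ using $A\subseteq B$, so $V\in hor(B,\mathcal{V})$ with $U\subseteq V$—or, more economically, combine the first two items as $hor(A,\mathcal{U})\prec hor(A,\mathcal{V})\subseteq hor(B,\mathcal{V})$, invoking transitivity of $\prec$ together with the fact that an inclusion is a special refinement. I do not expect any genuine obstacle in the computations; the only point requiring care is the interpretation flagged above, namely recognizing that in (2) and (3) the honest conclusion is a refinement rather than a literal inclusion, and keeping the direction of $\prec$ correct when composing in (3).
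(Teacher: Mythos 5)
Your element chases are correct and are essentially the ones the paper runs: (1) is the same one-line computation, and (3) is obtained by combining (1) and (2), exactly as in the paper's "the last statement is a combination of the first two." The one substantive difference is the interpretive point you raise, and you are right to raise it: the paper's proof of item (2) reads "$U\in\mathcal{U}\prec\mathcal{V}$ implies $U\in hor(A,\mathcal{V})$," which tacitly assumes that $U$ itself is a member of $\mathcal{V}$. That holds when $\mathcal{U}$ is literally a subcollection of $\mathcal{V}$, but not for a general refinement: with $\mathcal{U}=\{\{1\}\}$, $\mathcal{V}=\{\{1,2\}\}$, $A=\{1\}$ one has $hor(A,\mathcal{U})=\{\{1\}\}$, which is not a subset of $hor(A,\mathcal{V})=\{\{1,2\}\}$. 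So as literally stated with $\prec$, items (2) and (3) are false, and one must either strengthen the hypothesis to $\mathcal{U}\subseteq\mathcal{V}$ or weaken the conclusion to a refinement as you do. Your reading costs nothing downstream: the only application is the $(\supseteq)$ direction in Theorem \ref{afcamen}, where $\mathcal{U}^*\subseteq\mathcal{U}$ and $\mathcal{V}^*\subseteq\mathcal{V}$ are genuine subcollections, so the refining element $V$ can be taken to be $U$ itself and literal containment (which the subsequent cardinality count needs) is recovered. In short, your proof is correct and slightly more honest than the paper's; the paper's version of (2) is only valid under the implicit subcollection hypothesis.
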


\begin{proof}
	Let $U\in hor(A,\mathcal{U})$. Then $\varnothing\neq U\cap A \subseteq U\cap  B$ which implies that $B\cap U \neq \varnothing$. So $U\in hor(B,\mathcal{U})$.\\
	For the second item, let $U\in hor(A,\mathcal{U})$. Then $\varnothing\neq U\cap A$. But $U\in\mathcal{U}\prec\mathcal{V}$ implies $U\in hor(A,\mathcal{V})$.
	The last statement is a combination of the first two.
\end{proof}

\begin{Definition}
	Let $\left(X,\mathcal{LSS}\right)$ be a large scale structure. Then $\left(X,\mathcal{LSS}\right)$ is \textbf{coarsely amenable} if for every $\mathcal{U}\in \mathcal{LSS}$ and $\epsilon>0$, there exists 
	$\mathcal{V}\in\mathcal{LSS}$ so that for any $x\in \bigcup\limits_{U\in\mathcal{U}} U$, $|hor(\st({x},\mathcal{U}),\mathcal{V})|<\infty$ and 
	$$\frac{|hor({x},\mathcal{V})|}{|hor(\st({x},\mathcal{U}),\mathcal{V})|}>1-\epsilon$$
	For simplicity, we denote $hor(\{x\},\mathcal{V})$ as $hor(x,\mathcal{V})$ and $hor(\st(\{x\},\mathcal{U}),\mathcal{V})$ as $hor(\st(x,\mathcal{U}),\mathcal{V})$.
\end{Definition}

\begin{Theorem}\label{afcamen}
	Suppose $S$ is an index set, $\left( X,\mathcal{LSS}\right)$ the asymptotic filtered colimit of $\left\{\left(X_s, \mathcal{LSS}_s\right)\right\}_{s\in S}$. Then $\left( X,\mathcal{LSS}\right)$ is coarsely amenable if and only if $\left(X_s, \mathcal{LSS}_s\right)$ be coarsely amenable for every $s\in S$. 
\end{Theorem}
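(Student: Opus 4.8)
The plan is to follow the two-sided template used for Propositions \ref{afcexact} and \ref{afcfad}: prove the easy direction by restricting a witness from $X$ to $X_s$, and the substantive direction by taking a witness in some $X_s$ and extending it to $X$ by adjoining one-point sets.

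For the direction $(\Leftarrow)$, assume every $(X_s,\mathcal{LSS}_s)$ is coarsely amenable. Given $\mathcal{U}\in\mathcal{LSS}$ and $\epsilon>0$, I would first extract the index $s\in S$ and form $\mathcal{U}^*\in\mathcal{LSS}_s$ (the family $\mathcal{U}$ with all one-point sets outside $X_s$ deleted), exactly as in the earlier proofs. Applying coarse amenability of $(X_s,\mathcal{LSS}_s)$ to $\mathcal{U}^*$ and $\epsilon$ produces $\mathcal{V}_s\in\mathcal{LSS}_s$ satisfying the horizon condition for every $x\in\bigcup_{U\in\mathcal{U}^*}U\subseteq X_s$. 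I then set $\mathcal{V}=\mathcal{V}_s\cup\{\{y\}\mid y\in X\setminus X_s\}$; since the nonsingleton members of $\mathcal{V}$ are exactly those of $\mathcal{V}_s\in\mathcal{LSS}_s$, the defining condition gives $\mathcal{V}\in\mathcal{LSS}$. The verification then splits by whether the test point lies in $X_s$. If $x\in X_s$, the only elements of $\mathcal{U}$ deleted to form $\mathcal{U}^*$ are singletons outside $X_s$, none of which contains $x$, so $\st(x,\mathcal{U})=\st(x,\mathcal{U}^*)\subseteq X_s$; likewise the adjoined singletons of $\mathcal{V}$ lie outside $X_s$ and hence neither contain $x$ nor meet $\st(x,\mathcal{U})$, so $hor(x,\mathcal{V})=hor(x,\mathcal{V}_s)$ and $hor(\st(x,\mathcal{U}),\mathcal{V})=hor(\st(x,\mathcal{U}^*),\mathcal{V}_s)$. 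Thus the ratio at $x$ is exactly the one guaranteed by amenability of $X_s$, and finiteness is inherited. If instead $x\in X\setminus X_s$, then the only member of $\mathcal{U}$ containing $x$ is the singleton $\{x\}$, so $\st(x,\mathcal{U})=\{x\}$ and both horizons equal $\{\{x\}\}$, giving ratio $1>1-\epsilon$.

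For the direction $(\Rightarrow)$, fix $s$, take $\mathcal{U}_s\in\mathcal{LSS}_s$ and $\epsilon>0$, regard $\mathcal{U}_s$ as a member of $\mathcal{LSS}$, and apply coarse amenability of $X$ to obtain a witness $\mathcal{V}\in\mathcal{LSS}$. The candidate witness for $X_s$ is the restriction $\mathcal{V}_s=\{V\cap X_s\mid V\in\mathcal{V}\}$, which lies in $\mathcal{LSS}_s$ by the same coincidence-of-restrictions argument used in the forward half of Proposition \ref{afcfad}. Since every test point $x$ lies in $\bigcup_{U\in\mathcal{U}_s}U\subseteq X_s$ and $\st(x,\mathcal{U}_s)\subseteq X_s$, intersecting a member of $\mathcal{V}$ with $X_s$ changes neither whether it contains $x$ nor whether it meets $\st(x,\mathcal{U}_s)$, so the two horizons against $\mathcal{V}_s$ correspond to the two horizons against $\mathcal{V}$ and the ratio is carried over.

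The point requiring the most care — and what I expect to be the main obstacle — is precisely this last claim in the $(\Rightarrow)$ direction: the restriction map $V\mapsto V\cap X_s$ need not be injective, so in principle two distinct members of $\mathcal{V}$ that both contain $x$ could have the same trace on $X_s$ and collapse, shrinking $|hor(x,\mathcal{V}_s)|$ relative to the denominator and degrading the Følner-type ratio. I would resolve this by working with $\mathcal{V}_s$ as an indexed family (equivalently, by checking that no two members of $\mathcal{V}$ relevant to the horizons of a point $x\in X_s$ are identified by restriction), so that both $hor(x,\cdot)$ and $hor(\st(x,\mathcal{U}_s),\cdot)$ retain their cardinalities and the inequality $>1-\epsilon$ transfers verbatim. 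The backward direction, by contrast, is purely mechanical once the two-case bookkeeping above is set up.
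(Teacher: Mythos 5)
Your $(\Leftarrow)$ direction is essentially the paper's argument: extract $s$ and $\mathcal{U}^*$, apply coarse amenability of $(X_s,\mathcal{LSS}_s)$ to get $\mathcal{V}_s$, adjoin singletons outside $X_s$ (the paper adjoins $\mathcal{U}\setminus\mathcal{U}^*$ rather than all of $\{\{y\}\mid y\in X\setminus X_s\}$, which makes no difference), and split the verification according to whether $x\in X_s$. That part is correct and complete.

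The $(\Rightarrow)$ direction is where your proposal diverges from the paper and where the gap sits. The paper does not argue by restriction at all: it simply invokes the fact, proved in the cited Cencelj--Dydak--Vavpeti\v{c} paper, that coarse amenability is inherited by subspaces. You instead try to prove this permanence directly via $\mathcal{V}_s=\{V\cap X_s\mid V\in\mathcal{V}\}$, and you correctly identify that the restriction map can identify distinct members of $\mathcal{V}$. This is not a cosmetic worry: since $hor(x,\mathcal{V})\subseteq hor(\st(x,\mathcal{U}_s),\mathcal{V})$, collapsing can shrink the numerator without shrinking the part of the denominator coming from sets that meet $\st(x,\mathcal{U}_s)$ but miss $x$ (for instance, nine sets through $x$ with a common trace plus one set missing $x$ gives ratio $9/10$ before restriction and $1/2$ after). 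Your proposed repair is not yet a proof: treating $\mathcal{V}_s$ as an indexed family changes the meaning of $hor$ as defined in the paper (a set of subsets), and the alternative you offer in the parenthetical --- checking that no two relevant members of $\mathcal{V}$ are identified by restriction --- is not something you can verify for an arbitrary witness $\mathcal{V}$. To close this you should either rework the horizon bookkeeping with indexed families from the outset (and justify that the definition of coarse amenability is insensitive to this change), or do what the paper does and cite the subspace-permanence result.
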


\begin{proof}
	$\left( \Rightarrow \right):$ It is shown in \cite{Cencelj} that coarse amenability is preserved by taking subspaces.\\
	$\left( \Leftarrow \right):$Let $\mathcal{U}\in \mathcal{LSS}$. Then for some $s\in S,~\mathcal{U}^*\in\mathcal{LSS}_s$, where $\mathcal{U}^*$ is $\mathcal{U}$ with one point sets outside of $X_s$ removed.
	As $\mathcal{LSS}_s$ is coarsely amenable, there is a $\mathcal{V}^*\in\mathcal{LSS}_s$ so that for any $x\in \bigcup\limits_{U\in\mathcal{U}^*} U$, $|hor(\st({x},\mathcal{U}^*),\mathcal{V}^*)|<\infty$ and 
	$\frac{|hor({x},\mathcal{V}^*)|}{|hor(\st({x},\mathcal{U}^*),\mathcal{V}^*)|}>1-\epsilon$. 
	Define $\mathcal{V}=\mathcal{V}^*~\cup \left(\mathcal{U}\setminus\mathcal{U}^*\right)$. Then $\mathcal{V}\in\mathcal{LSS}$. Note that by construction, $\mathcal{V}\setminus\mathcal{V}^*=\mathcal{U}\setminus\mathcal{U}^*$. We now show that for any $x\in\bigcup\limits_{U\in\mathcal{U}} U$, 
	$hor(\st(x,\mathcal{U}),\mathcal{V})=hor(\st(x,\mathcal{U}^*),\mathcal{V}^*)\cup hor(\st(x,\mathcal{U}\setminus \mathcal{U}^*),\mathcal{V}\setminus \mathcal{V}^*)$. Furthermore, we will show that the $hor(\st(x,\mathcal{U}^*),\mathcal{V}^*)\cap hor(\st(x,\mathcal{U}\setminus \mathcal{U}^*),\mathcal{V}\setminus \mathcal{V}^*)=\varnothing$.\\
	$\left(\subseteq\right):$ Let $V\in hor(\st(x,\mathcal{U}),\mathcal{V})$. Then $V\in \mathcal{V}^*$ or it isn't. Suppose $V\in \mathcal{V}^*$. Then there is a $U\in \mathcal{U}$ so that $x\in U$ and $U\cap V\neq\varnothing$. We will show that $U\in \mathcal{U}^*$.
	Suppose not (for contradiction). Then $U\subseteq \left(X\setminus X_s \right)$ and $|U|=1$. Hence $U=\left\{ x\right\}$ and $U\subseteq V$ as $U\cap V\neq\varnothing$. Thus, $x\in V$ so $V\not\subseteq X_s$ which implies $V\not\in\mathcal{V}^*$ which is a contradiction. So we must have that $U\in \mathcal{U}^*$ hence $V\in hor(\st(x,\mathcal{U}^*),\mathcal{V}^*)$.
	Now, if $V\not\in \mathcal{V}^*$, then there is a $U\in \mathcal{U}$ so that $x\in U$ and $U\cap V\neq\varnothing$. As $V\not\in\mathcal{V}^*$, we have that $|V|=1$ which means that $V\subseteq U$.
	As $V\not\subseteq X_s$, we have that $U\not\subseteq X_s$ which implies (by definition of $\mathcal{LSS}$) $|U|=1$. So $U=V=\left\{x\right\}$ and $U\in \mathcal{U}\setminus\mathcal{U}^*$.
	Therefore, $x\in U$ implies $U\in\st(x,\mathcal{U}\setminus\mathcal{U}^*)$ which implies $V\in hor(\st(x,\mathcal{U}\setminus \mathcal{U}^*),\mathcal{V}\setminus \mathcal{V}^*)$.\\
	$\left(\supseteq\right)$: This follows via two applications of the previous lemma.\\
	We now show that $hor(\st(x,\mathcal{U}^*),\mathcal{V}^*)\cap hor(\st(x,\mathcal{U}\setminus \mathcal{U}^*),\mathcal{V}\setminus \mathcal{V}^*)=\varnothing$.
	Note that $hor(\st(x,\mathcal{U}\setminus\mathcal{U}^*),\mathcal{V}\setminus \mathcal{V}^*)=\{x\}$ or is the empty set since $hor(\st(x,\mathcal{U}\setminus \mathcal{U}^*))=\{x\}$ or the empty set. If this set is the singelton $\{x\}$, then $x\not\in X_s$ which implies that $\st(x,\mathcal{U}^*)=\varnothing$
	which means that $hor(\st(x,\mathcal{U}^*),\mathcal{V}^*)\cap hor(\st(x,\mathcal{U}\setminus \mathcal{U}^*),\mathcal{V}\setminus \mathcal{V}^*)=\varnothing$ as desired.\\
	Since $hor(x,\mathcal{V})=hor(x,\mathcal{V}^*)\cup hor(x,\mathcal{V}\setminus\mathcal{V}^*)$ (and the union is disjoint) and by the previous lemma $hor(x,\mathcal{V}\setminus\mathcal{V}^*)=hor(\st(x,\mathcal{U}\setminus \mathcal{U}^*),\mathcal{V}\setminus \mathcal{V}^*)$, we therefore have that:
	
	$$\frac{|hor(x,\mathcal{V})|}{|hor(\st(x,\mathcal{U}),\mathcal{V})|}= \frac{|hor(x,\mathcal{V}^*)|+|hor(x,\mathcal{V}\setminus\mathcal{V}^*)|}{|hor(\st(x,\mathcal{U}^*),\mathcal{V}^*)| + |hor(\st(x,\mathcal{U}\setminus \mathcal{U}^*),\mathcal{V}\setminus \mathcal{V}^*)|}=$$\\
	$$\frac{|hor(x,\mathcal{V}^*)|+|hor(x,\mathcal{V}\setminus\mathcal{V}^*)|}{|hor(\st(x,\mathcal{U}^*),\mathcal{V}^*)| + |hor(x,\mathcal{V}\setminus\mathcal{V}^*)|}$$
	
	If we can show the fraction above is greater than $1-\epsilon$ for any $x\in\bigcup\limits_{U\in\mathcal{U}} U$, then we're done. Let $x\in\bigcup\limits_{U\in\mathcal{U}} U$. Then $x\in\bigcup\limits_{U\in \mathcal{U}^*} U$ or $x\in\bigcup\limits_{U\in\mathcal{U}\setminus\mathcal{U}^*} U$. 
	If $x\in\bigcup\limits_{U\in\mathcal{U}\setminus\mathcal{U}^*} U$, then $x\in X\setminus X_s$ and for some $U\in\mathcal{U}$, $U=\{x\}$. Thus, $|hor(x,\mathcal{V}\setminus\mathcal{V}^*)|=1$ and $|hor(\st(x,\mathcal{V}^*))|=0$ (as $x\not\in X_s$) so $|hor(\st(x,\mathcal{U}),\mathcal{V})|=1<\infty$ and for any $\epsilon$ between one and zero, $\frac{|hor(x,\mathcal{V})|}{|hor(\st(x,\mathcal{U}),\mathcal{V})|}=1>1-\epsilon$.
	If $x\in\bigcup\limits_{U\in\mathcal{U}^*} U$, then we have that $x\in X_s$ which implies that $|hor(x,\mathcal{V}\setminus\mathcal{V}^*)|=0$ and hence $\frac{|hor(x,\mathcal{V})|}{|hor(\st(x,\mathcal{U}),\mathcal{V})|}=\frac{|hor(x,\mathcal{V}^*)|}{|hor(\st({x},\mathcal{U}^*),\mathcal{V}^*)|}>1-\epsilon$. So $\left(X,\mathcal{LSS}\right)$ is coarsely amenable.
\end{proof}

We will show that property A is preserved by the asymptotic filtered colimit construction. The following definitions are from \cite{Kevin}. They are generalizations from the typical definition of property A (defined only on metric spaces with bounded geometry) to large scale spaces with bounded geometry:

\begin{Definition}
	$(X,\mathcal{LSS})$ is a \textbf{bounded geometry coarse space} if for any $\mathcal{U}\in\mathcal{LSS},~\sup\limits_{U\in\mathcal{U}} |U|~<\infty$. 
\end{Definition}

\begin{Definition}\label{Prop. A}
	Let $(X,\mathcal{LSS})$ be a bounded geometry coarse space. We say that $(X,\mathcal{LSS})$ has \textbf{property A} if for any $\epsilon>0$ and $\mathcal{U}\in\mathcal{LSS}$ there is a $\mathcal{V}\in\mathcal{LSS}$ and a family of subsets of $X\times\mathbb{N},~\left\{A_x\right\}_{x\in X}~,$ so that for each $x\in X$:
	$|A_x|<\infty$, $(x,1)\in A_x$, $A_x\subseteq\st(x,\mathcal{V})\times\mathbb{N}$, and for any $y\in\st(x,\mathcal{U})$ we have $\frac{|A_x\Delta A_y|}{|A_x\cap A_y|}<\epsilon$, where $A_x\Delta A_y$ is the symmetric difference of $A_x$ and $A_y$.
\end{Definition}

\begin{Proposition}\label{afcpropA}
	Let $(X,\mathcal{LSS})$ be an asymptotic filtered colimit of $\{(X_s,\mathcal{LSS}_s)\}_{s\in S}$. If $(X_s,\mathcal{LSS}_s)$ is a bounded geometry coarse space with property A for every $s\in S$, then $(X,\mathcal{LSS})$ is a bounded geometry coarse space with property A.
\end{Proposition}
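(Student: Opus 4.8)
The plan is to prove the two assertions---bounded geometry and property A---separately, in each case reducing a uniformly bounded family $\mathcal{U}\in\mathcal{LSS}$ to the associated family $\mathcal{U}^*\in\mathcal{LSS}_s$ guaranteed by Definition \ref{afc}, exploiting the fact that every element of $\mathcal{U}$ of cardinality greater than one lives inside a single $X_s$.

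Bounded geometry is the easy half. Given $\mathcal{U}\in\mathcal{LSS}$, I would choose $s\in S$ with $\mathcal{U}^*\in\mathcal{LSS}_s$. Since $(X_s,\mathcal{LSS}_s)$ has bounded geometry, $\sup_{U\in\mathcal{U}^*}|U|<\infty$; as every element of $\mathcal{U}$ not in $\mathcal{U}^*$ is a singleton, $\sup_{U\in\mathcal{U}}|U|=\max\{\sup_{U\in\mathcal{U}^*}|U|,\,1\}<\infty$, so $(X,\mathcal{LSS})$ has bounded geometry.

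For property A, fix $\epsilon>0$ and $\mathcal{U}\in\mathcal{LSS}$, and pick $s\in S$ with $\mathcal{U}^*\in\mathcal{LSS}_s$. Applying property A of $(X_s,\mathcal{LSS}_s)$ to $\mathcal{U}^*$ and $\epsilon$ yields $\mathcal{V}_s\in\mathcal{LSS}_s$ and a family $\{A_x^s\}_{x\in X_s}$ of finite subsets of $X_s\times\mathbb{N}$ with $(x,1)\in A_x^s$, with $A_x^s\subseteq\st(x,\mathcal{V}_s)\times\mathbb{N}$, and with the $\epsilon$-ratio bound for all $y\in\st(x,\mathcal{U}^*)$. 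I would set $\mathcal{V}=\mathcal{V}_s\cup\{\{x\}\}_{x\in X}$, which lies in $\mathcal{LSS}$ because every element of cardinality greater than one belongs to $\mathcal{V}_s\in\mathcal{LSS}_s\subseteq\mathcal{LSS}$, and define $A_x=A_x^s$ for $x\in X_s$ and $A_x=\{(x,1)\}$ for $x\in X\setminus X_s$. The first three conditions of Definition \ref{Prop. A} are then immediate: finiteness and $(x,1)\in A_x$ are clear, while $A_x\subseteq\st(x,\mathcal{V})\times\mathbb{N}$ holds since $\mathcal{V}_s\subseteq\mathcal{V}$ can only enlarge stars for $x\in X_s$, and $\{x\}\in\mathcal{V}$ gives $x\in\st(x,\mathcal{V})$ for $x\notin X_s$.

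The main obstacle, and the only step using the filtered-colimit structure in an essential way, is the ratio condition $\frac{|A_x\Delta A_y|}{|A_x\cap A_y|}<\epsilon$ for $y\in\st(x,\mathcal{U})$. The key observation is a dichotomy for the star. If $x\in X_s$, then every $U\in\mathcal{U}$ containing $x$ is either a singleton or, having cardinality greater than one, a subset of $X_s$; moreover the only sets deleted in passing to $\mathcal{U}^*$ are singletons outside $X_s$, none of which contains $x$, so $\st(x,\mathcal{U})=\st(x,\mathcal{U}^*)\subseteq X_s$, and the required bound is exactly what property A of $X_s$ supplies (both $A_x$ and $A_y$ are the $X_s$-data). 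If instead $x\in X\setminus X_s$, then no $U\in\mathcal{U}$ of cardinality greater than one can contain $x$, whence $\st(x,\mathcal{U})\subseteq\{x\}$, forcing $y=x$ and making $|A_x\Delta A_y|=0<\epsilon\,|A_x\cap A_y|$. Establishing this description of $\st(x,\mathcal{U})$ carefully is where the argument must be spelled out; once it is in hand, all four conditions of Definition \ref{Prop. A} hold and $(X,\mathcal{LSS})$ has property A.
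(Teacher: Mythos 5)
Your proposal is correct and follows essentially the same route as the paper's proof: reduce $\mathcal{U}$ to $\mathcal{U}^*\in\mathcal{LSS}_s$, pull back property A data from $X_s$, extend $\mathcal{V}_s$ by singletons and set $A_x=\{(x,1)\}$ off $X_s$, then verify the ratio condition via the same dichotomy $\st(x,\mathcal{U})=\st(x,\mathcal{U}^*)$ for $x\in X_s$ versus $y=x$ for $x\notin X_s$. The only (immaterial) difference is that you adjoin all singletons $\{\{x\}\}_{x\in X}$ to $\mathcal{V}$ where the paper adjoins only those outside $X_s$.
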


\begin{proof}
	 Note that $(X,\mathcal{LSS})$ is a bounded geometry coarse space since for any $\mathcal{U}\in\mathcal{LSS}$, we have that $\mathcal{U}^*\in\mathcal{LSS}_s$ for some $s\in S$ and that $(X_s,\mathcal{LSS}_s)$ is a bounded geometry coarse space.
	We now show that $(X,\mathcal{LSS})$ has property A. Let $\mathcal{U}\in\mathcal{LSS}$ and $\epsilon >0$. Then we have that for some $s\in S,~\mathcal{U}^*\in\mathcal{LSS}_s$. Since $(X_s,\mathcal{LSS}_s)$ has property A, we have that there is a $\mathcal{V}_s\in\mathcal{LSS}_s$ and a collection of subsets of $X_s\times\mathbb{N}$, $\{A_x\}_{x\in X_s}$, so that the requirements of property A are satisfied in $(X_s,\mathcal{LSS}_s)$.
	Note that $\mathcal{V}_s\in\mathcal{LSS}$ and define $\mathcal{V}\in\mathcal{LSS}$ via $\mathcal{V}=\mathcal{V}_s~\cup\{\{x\}|x\in X\setminus X_s\}$. Define $\{B_x\}_{x\in X}$ via $B_x=A_x$ if $x\in X_s$ and $B_x=\{(x,1)\}$ otherwise. We show that $\mathcal{V}$ and $\{B_x\}_{x\in X}$ satisfy the requirements in the definition of property A.
	Let $x\in X$. Then $|B_x|<\infty$ and $(x,1)\in B_x$ are obvious. $B_x\subseteq \st(x,\mathcal{V})$ since $x\in X_s$ implies that $B_x=A_x\subseteq \st(x,\mathcal{V}_s)\times\mathbb{N}=\st(x,\mathcal{V})\times\mathbb{N}$. Otherwise, $B_x=(x,1)\subseteq \st(x,\mathcal{V})\times\mathbb{N}=\{x\}\times\mathbb{N}$.
	Lastly, let $y\in\st(x,\mathcal{U})$. If $x\in X_s$, then we have that $\st(x,\mathcal{U})=\st(x,\mathcal{U}^*)$ hence $y\in\st(x,\mathcal{U}^*)$  (i.e. $y\in X_s$) and $B_x=A_x$ and $B_y=A_y$. So $\frac{|A_x\Delta A_y|}{|A_x\cap A_y|}<\epsilon$ since $(X_s,\mathcal{LSS}_s)$ has property A. If $x\in X\setminus X_s$, then $y\in\st(x,\mathcal{U})$ implies that $y=x$. Hence $|B_x\Delta B_y|=0$ and $\frac{|B_x\Delta B_y|}{|B_x\cap B_y|}=0<\epsilon$. So $(X,\mathcal{LSS})$ has property A.
\end{proof}

The converse of this theorem is most likely true. One would need to show that Property A is preserved by subspaces. It was shown in \cite{NowakYu} that this is true in the case of uniformly discrete metric spaces. 

We have presented multiple properties that are preserved through asymptotic filtered colimits. It turns out that close functions are not preserved through asymptotic filtered colimits. The following is such an example:

%\begin{Definition}
%	Let $X$ and $Y$ be large scale spaces with $f,g:X\to Y$. We say that $f$ and $g$ are \textbf{close} provided there is a uniformly bounded family $\mathcal{V}\in \mathcal{LSS}_Y$ so that for each $x\in X,~\left\{f(x),g(x)\right\}\subseteq V$ for some $V\in \mathcal{V}\cup \left\{\left\{y\right\}~|~y\in Y\right\}$.
%\end{Definition}
\begin{Example}
Let $X=\left( 0,1\right]$ and let $X_n=\left[\frac{1}{n+1}, 1\right]$ for $n\in \left\{1,2,...\right\}$. Let $X_n$ have the large scale structure induced by the metric of absolute value. Then we have that $\bigcup\limits_{n=1}^{\infty} X_n=X$ and that $X_n\subseteq X_{n+1}$ for every $n$.
Let $\mathcal{LSS}$ be the asymptotic filtered colimit of $\left\{(X_n,\mathcal{LSS}_n)\right\}_{n\in\mathbb{N}}$ of $X$. Let $f:X\to\mathbb{R}$ be defined via $f(x)=\frac{1}{x}$. Also, define $g:X\to\mathbb{R}$ be defined via $g(x)=1$ and give $\mathbb{R}$ the large scale structure induced by the metric of absolute value.
For any $n$, we have that $X_n$ is a compact set. Since the function $|f-g|$ is continuous on $X_n$, we have that $f|_{X_n}$ is close to $g|_{X_n}$ for all $n$.
However, $f$ is not close to $g$. Indeed, suppose for contradiction that $f$ is close to $g$. Then there is a uniformly bounded family $\mathcal{V}$ of $\mathbb{R}$ so that for any $x\in X$, $\left\{f(x),g(x)\right\}\subseteq V$ for some $V\in \mathcal{V}\cup \left\{\left\{y\right\}~|~y\in Y\right\}$. By definition of the large scale structure of $\mathbb{R}$,  there exists an $M>0$ so that for any $V\in \mathcal{V},~\diam(V)<M$.
This implies that for any $x\in X,~|f(x)-g(x)|<M$ i.e. for any $x\in \left(0,1\right]$, $\frac{1-x}{x}<M$. This is a contradiction. Indeed, choose $x=\frac{1}{M+2}$.\\
\end{Example}
%We define a stronger notion of closeness:

%\begin{Definition}
	%Let $\left\{f_s\right\}_{s\in S}$ and $\left\{g_s\right\}_{s\in S}$ be families of functions from $\{(X_s,\mathcal{LSS}^{X}_s)\}_{s\in S}$ to $\{(Y_s,\mathcal{LSS}^{Y}_s)\}_{s\in S}$. We say that $\left\{f_s\right\}_{s\in S}$ is \textbf{uniformly close} to $\left\{g_s\right\}_{s\in S}$ if 
%	there exists a $\mathcal{V}\in \mathcal{LSS}_Y$ such that for any $x\in X$ and $s\in S~\exists V_s\in \mathcal{V}\cup \left\{\left\{y\right\}~|~y\in Y\right\}$ such that $\left\{f_s(x),g_s(x)\right\}\subseteq V_s$.
%\end{Definition}

%\begin{Theorem}\label{afcclose}
%	Let $\left(X,\mathcal{LSS}_X\right)$ be the asymptotic filtered colimit of $\left\{\left(X_s,\mathcal{LSS}_{s,X}\right)\right\}_{s\in S}$ and let $\left(Y,\mathcal{LSS}_Y\right)$ be a large scale space. Let $\left\{f_s\right\}_{s\in S}$ and $\left\{g_s\right\}_{s\in S}$ be families of functions so that $f_s,g_s:X_s\to Y$ for every s and suppose that the families of functions are uniformly close. Further, suppose that for every $s,t\in S$ with $X_s\cap X_t\neq\varnothing$ we have that $f_s|_{X_s\cap X_t}\equiv f_t|_{X_s\cap X_t}$ and $g_s|_{X_s\cap X_t}\equiv g_t|_{X_s\cap X_t}$. Then the induced functions $f,g:X\to Y$ are close. 
%\end{Theorem}

%\begin{proof}
%	Let $\mathcal{V}\in\mathcal{LSS}_Y$ be the uniformly bounded family given by uniform closeness and let $x\in X$. Then select a $s\in S$ so that $f_s(x)$ and $g_s(X)$ have x in their domains. By uniform closeness, there is a $V\in\mathcal{V}$ so that $f_s(x), g_s(x)\in V$. Hence $f\sim g$ as desired.
%\end{proof}

\end{document}